\numberwithin{equation}{section} % Number equations within sections
\pgfplotsset{compat=1.18}           % PGFPlots compatibility version
\tikzset{
	place/.style={circle, thick, draw=black, fill=gray!50, minimum size=20mm},
	state/.style={circle, thick, draw=blue!75, fill=blue!20, minimum size=20mm},
	cross/.pic = {
		\draw[rotate = 45] (-0.2,0) -- (0.2,0);
		\draw[rotate = 45] (0,-0.2) -- (0, 0.2);
	}
}
\newcommand{\inner}[1]{\langle #1\rangle}
\newcommand{\floor}[1]{\lfloor #1 \rfloor}
\newcommand{\ceil}[1]{\lceil #1 \rceil}
\newcommand{\eps}{\epsilon}
\newcommand{\rmd}{\mathrm{d}}   % Differential symbol
\newcommand{\ndef}[1]{{\textcolor{blue}{#1}}}
\renewcommand{\r}{\mathbf{r}}
\newcommand{\T}{\mathbb{T}}
\newcommand{\where}{\quad\text{where}\quad}
\theoremstyle{plain}
\newtheorem{Th}{Theorem}[section]
\newtheorem{Lemma}[Th]{Lemma}
\newtheorem{Prop}[Th]{Proposition}
\theoremstyle{definition}
\newtheorem{Def}[Th]{Definition}
\newtheorem{Rem}[Th]{Remark}
\newtheorem{?}[Th]{Problem}
\newtheorem{Ass}[Th]{Assumption}
\title[]
{Continuously Parametrised Porous Media Model and Scaling Limits of Kinetically Constrained Models}
\author{Gabriel S. Nahum}
\address{Gabriel S. Nahum\\
	inria Lyon, DRACULA team\\
	56 Bd. Niels Bohr, 69199 Villeurbanne, France}
\email{{\tt gabriel.nahum@protonmail.com}}
\begin{document}
%\subjclass[2010]{60K35, 35R11, 35S15}

\begin{abstract}
	We investigate the emergence of non-linear diffusivity in kinetically constrained, one-dimensional symmetric exclusion processes satisfying the gradient condition. Recent developments introduced new gradient dynamics based on the Bernstein polynomial basis, enabling richer diffusive behaviours but requiring adaptations of existing techniques. In this work, we exploit these models to generalise the Porous Media Model to non-integer parameters and establish simple conditions on general kinetic constraints under which the empirical measure of a perturbed version of the process converges. This provides a robust framework for modelling non-linear diffusion from kinetically constrained systems.
\end{abstract}

\maketitle

\renewcommand{\thefootnote}{} 
\footnote{$^\dag$\textsc{
    inria lyon DRACULA,
    56 Bd. Niels Bohr, 69199 Villeurbanne, France
    }
    \par\nopagebreak
    \;\;
    \textit{E-mail address}: 
    \texttt{gabriel.nahum@protonmail.com}}
% Restore default numbering for other footnotes
\renewcommand{\thefootnote}{\arabic{footnote}}

\tableofcontents

\section{Introduction}

This work lies within the framework of statistical mechanics, specifically in the study of interacting particle systems, consisting of a large number of agents interacing with each other, evolving according to a Markovian law. A central question in this context is to determine which types of macroscopic diffusivities can emerge from such microscopic, stochastic models. We focus on one of the simplest classes of these systems: one-dimensional, symmetric exclusion processes. Particles are randomly distributed on the lattice, subject to the exclusion rule that at most one particle may occupy each site. The system evolves through nearest-neighbour jumps, without directional bias, and with rates depending on the local configuration. These jumps are governed by independent Poisson processes, each associated with a pair of sites and with rates given determined by the local state.

A foundational toy model was introduced in \cite{GLT} to capture non-linear diffusion of the form $\rho^n$ for positive integer values of $n$. In this dynamics, a pair of neighbouring sites exchange their occupation values at a rate determined by the number of boxes of $n$ aligned particles surrounding the pair. If the surrounding configuration contains sufficient vacancies, the exchange is suppressed. This model, commonly referred to as the Porous Media Model (PMM), enjoys technical properties that make it analytically tractable. It has since been studied in various contexts, such as systems with periodic boundary conditions and in higher dimensions \cite{GLT}; open systems, which lead to different boundary conditions depending on the scaling of the boundary dynamics \cite{BDGN,renatoenergy}; extensions to long-range dynamics \cite{CG24}; and superpositions of different PMMs \cite{s2f,CGN24}. Our work lies within the context of the last two, that we now describe.

In \cite{s2f}, the PMM was extended to include non-integer parameters, defining a dynamics associated with $\rho^m$ for $m \in (0,2)$. As non-integer exponents cannot be obtained through a direct combinatorial extension of the PMM, the authors applied the generalised binomial theorem to expand the diffusivity, and then considered weighted superpositions of PMMs with different parameters. However, the process becomes ill-defined for $m > 2$ due to the emergence of negative rates.

More recently, in \cite{CGN24}, the ideas of \cite{s2f} were extended to construct diffusivities of the form $\sum_{k\geq 0} b_k \rho^k$, though still falling short of a complete generalisation of the PMM for $m > 2$. Nonetheless, that work provided precise conditions on the coefficients $b_k$ under which a statistical analysis is possible. In parallel, \cite{N24} established that PMMs cannot produce diffusivities with multiple degenerate roots of order greater than two, and introduced a new collection of processes, referred to as Bernstein models, to address this limitation. This class is associated with the Bernstein polynomial basis, and generalises the PMM into a two-parameter model exhibiting degenerate diffusion at the extreme values $\rho = 0$ and $\rho = 1$. In the present work, we exploit this class to construct a generalised PMM that continuously interpolates between models associated with an integer parameter.
%%%%%%%%%%%%%%%%%%%%%%%%%%%%%

Because the work in \cite{CGN24} is based on the PMM, producing diffusivities expressed in the monomial basis $\{\rho^n\}_{n\geq 0}$, the results cannot be directly applied to the processes introduced in \cite{N24}, and consequently not to the generalised PMM introduced here, which are naturally expressed in a different basis. 

It was shown in \cite{N24} that the Bernstein models are associated with a new collection of gradient dynamics, linked to the PMM and associated with the monomial basis $\{\rho^n\}_{n\geq 0}$, and with a combinatorial structure similar enough for arguments analogous to the ones in \cite{CGN24} to apply. However, rewriting a series originally expressed in the Bernstein basis into the standard basis leads to excessively large coefficients $(|b_k|)_{k\geq 0}$, which severely restricts the class of attainable diffusivities.

This observation highlights the need for an analysis independent of the choice of basis. This is precisely what we pursue in the second part of this work. Our goal is to provide simple conditions under which the hydrodynamic limit of the empirical measure holds. This corresponds to a (weak) law of large numbers for the local average of particles, showing that, under diffusive rescaling, it converges to the solution of a deterministic partial differential equation.

It is important to note that we study perturbed versions of kinetically constrained models. For non-irreducible dynamics, determining whether the system mixes sufficiently to allow a statistical description is a deep and difficult problem—still open for the PMM under general initial data \cite{GLT}—and we do not address it here. Instead, we perturb the dynamics with the Symmetric Simple Exclusion Process at a strength that is macroscopically negligible. This guarantees sufficient local mixing without modifying the hydrodynamic equation.

%%%%%%%%%%%%%%%%%%%%%%%%%%%%%%%
We show that, for jump rates depending on the system size, the hydrodynamic limit holds under simple conditions: a uniform bound on the maximum rate; a bound on the sum of rates over local configurations within a fixed-size box; a uniform Cauchy condition on a function associated with the so-called gradient condtition, also also its uniformly boundedness from above. These conditions are satisfied by the generalised PMM and provide a robust framework for modelling a broader class of diffusive behaviours.

\subsection{Outline of the paper}
In Section \ref{sec:context}, we define the relevant processes and state the main result. Specifically, Subsection \ref{subsec:model} is devoted to introducing the generalised PMM and other properties relevant to this work, while Subsection \ref{subsec:kcm} presents some general conditions for Kinetically Constrained Models under which we can guarantee the hydrodynamic limit for the empirical measure associated with a perturbed process. In Subsection \ref{subsec:topo}, we present the topological setting and the notions of weak solutions; and in Subsection \ref{subsec:hl}, we present the main result of this work: the hydrodynamic limit for the empirical measure associated with the perturbed process. We conclude with some technical comments.

The convergence towards the hydrodynamic limit is shown in Section \ref{sec:hl}. At the core of the proof are the so-called \textit{replacement lemmas}, presented in Section \ref{sec:rep-lem}. In Section \ref{sec:energy}, we show an energy estimate, guaranteeing uniqueness of solutions of the hydrodynamic equation.

\section{Setup and Main result}\label{sec:context}
For readability, any new notation introduced within a paragraph and used throughout the text will be highlighted in \ndef{blue}.

Let $\ndef{\mathbb{N}_+}$ be the set of positive natural numbers and denote by $\ndef{N} \in \mathbb{N}_+$ a scaling parameter. Our discrete lattice is $\ndef{\mathbb{T}_N}$, the one-dimensional discrete torus, $\mathbb{T}_N = \{1, \dots, N\}$ with the identification $0 \equiv N$. For any $x < y \in \mathbb{Z}$, that can be viewed as elements in $\mathbb{T}_N$ by considering their standard projections, we define the discrete interval $\ndef{\llbracket x,y\rrbracket}$, composed of all the points between $x, y$ (including $x, y$) in $\mathbb{T}_N$, where the order has been inherited from the one in $\mathbb{Z}$.

The central dynamics of this work is an interacting particle system, following a Markovian law, satisfying the exclusion rule and situated on the discrete torus $\mathbb{T}_N$. A configuration of particles is an element of the state space $\ndef{\Omega_N} = \{0,1\}^{\mathbb{T}_N}$ and will be recurrently denoted by the letters $\eta$ and $\xi$. We denote by $\ndef{\eta(x)} \in \{0,1\}$ the occupation value of $\eta \in \Omega_N$ at the site $x \in \mathbb{T}_N$.

The two main contributions of this work are the introduction of a collection of models continuously parametrised, interpolating the Porous Media Models with integer parameters; and the proof of the hydrodynamic limit for a class of symmetric exclusion processes with much more relaxed constraints compared to the current state of the art. Our proof includes the aforementioned collection of models, as well as superpositions of the models introduced in \cite{N24}. We begin by presenting the novel processes, and then present the context of our proof and how it encompasses this collection.

\subsection{Context and the generalized Porous Media Model}\label{subsec:model}
We start by introducing the operators associated with the processes that we are going to present, and fix some notation.  
\begin{Def}
In what follows, let $\eta\in\Omega_N$ and $x,y,z\in\mathbb{T}_N$ be arbitrary:
\begin{itemize}
        \item Let $\pi_x:\Omega_N\to\{0,1\}$ be the projection $\pi_x(\eta)=\eta(x)$;
        
        \item $\tau:\eta\mapsto\tau\eta$ is defined as the shift operator, $\pi_x(\tau\eta)=\pi_{x+1}(\eta)$, and short-write $\tau_i=\circ_{j=1}^i\tau$ for its $i$-th composition;

        \item Denote by $\theta_{x,y}$ the operator that exchanges the occupation-value of the sites $x,y$,
        \begin{align}
            (\theta_{x,y}\eta)(z)
            =\eta(z)1_{z\neq x,y}
            +\eta(y)1_{z=x}
            +\eta(x)1_{z=y};
        \end{align}

        \item Define the map $\eta\mapsto\overline{\eta}$ through $\overline{\eta}(x)=1-\eta(x)$;
        
    \end{itemize}
For $f:\Omega_N\to\mathbb{R}$ arbitrary,     
    \begin{itemize}
    	\item For $\mathcal{O}=\tau,\theta_{x,y}$, let $\mathcal{O} f(\eta):=f(\mathcal{O}\eta)$;
    	
    	\item Introduce the linear operators $\nabla_{x,y},\nabla,\Delta$ through % $\nabla_{x,y}[f]:=\theta_{x,y}[f]-f$ , $\nabla[f]:=\tau f-f$ and $\Delta[f]=\nabla^2[\tau_{-1}f]$. 
    	\begin{align}
    		\nabla_{x,y}[f]:=\theta_{x,y}f-f
    		,\quad
    		\nabla[f]:=\tau f-f 
    		\quad\text{and}\quad
    		\Delta[f]=\nabla^2[\tau_{-1}f]
    		;
    	\end{align}
    	
    	\item Shorten $|f|_{\infty}=\sup_{\eta\in\Omega_N}|f(\eta)|$.
    \end{itemize}
    
Note that $\nabla$ corresponds to the forward difference operator and $\Delta$ to the discrete Laplacian operator.
\end{Def}

This work is situated in the context of symmetric exclusion-type dynamics in a homogeneous environment. Specifically, these processes are continuous-time Markov chains, characterised by their infinitesimal generator, $\mathcal{L}$, and defined through its action on $f:\Omega_N\to\mathbb{R}$ by
\begin{align}
	\mathcal{L}_N[f]
	&=\sum_{x\in\T_N}
	\left(\tfrac12\mathbf{r}_{x,x+1} +
	\tfrac12\mathbf{r}_{x,x+1}\right)
	\nabla_{x,x+1}[f]
	,
\end{align}
where, for each $\eta\in\Omega_N$, the rate for a jump from $x$ to $x+1$ is given by $\mathbf{r}_{x,x+1}(\eta)$, while the rate from $x+1$ to $x$ is given by $\mathbf{r}_{x+1,x}(\eta)$. In our specific context, the rates are expressed as $\mathbf{r}_{x,x+1}=\mathbf{c}_{x,x+1}\mathbf{e}_{x,x+1}$ and $\mathbf{r}_{x+1,x}=\mathbf{c}_{x+1,x}\mathbf{e}_{x+1,x}$, where $\mathbf{c}_{x,x+1}\geq 0$ is the \textit{kinetic} constraint, and $\mathbf{e}_{x,x+1}(\eta)=\eta(x)(1-\eta(x+1))$ is the \textit{exclusion} constraint for the hopping from site $x$ to site $x+1$; $\mathbf{c}_{x+1,x}$ and $\mathbf{e}_{x+1,x}$ are the analogous constraints for a jump from $x+1$ to $x$. Both $\mathbf{c}_{x,x+1}$ and $\mathbf{c}_{x+1,x}$ depend on the range of interaction, possibly taking different values for different local configurations. The symmetry of the process is considered at the level of the constraints, $\mathbf{c}_{x,x+1}=\mathbf{c}_{x+1,x}$, and the homogeneity of the media is expressed through $\mathbf{c}_{x,x+1}=\tau_x\mathbf{c}_{0,1}$. Conveniently, the symmetry implies that the generator is reversible with respect to the Bernoulli product measure $\nu_\alpha^N:=\otimes_{x\in\T_N}\text{Ber}(\alpha)$, with $\alpha\in[0,1]$ and $\text{Ber}(\alpha)$ denoting the Bernoulli distribution. This allows us to express
\begin{align}\label{generic:gen}  
	\mathcal{L}_N[f](\eta)  
	=  
	\sum_{x\in\mathbb{T}_N}  
	\mathbf{c}(\tau_x\eta)  
	\nabla_{x,x+1}[f](\eta),  
\end{align}  
associating each symmetric process, as just described, with a constraint $\mathbf{c}$.

We are now ready to recall the Porous Media Model \cite{GLT}.
\begin{Def}\label{def:pmm}
	Fixed $n\in\mathbb{N}_+$, the PMM($n$) is the process given by the generator $\mathcal{L}_N^n$, defined by replacing in \eqref{generic:gen} the constraint $\mathbf{c}$ by 
    \begin{align}\label{pmm-cons}
		\mathbf{p}_{n}(\eta):=\sum_{i=0}^n
		\mathbf{p}_{n}^j
        ,
        \quad\text{with}\quad
        \mathbf{p}_{n}^j
        =\prod_{\substack{i=-(n+1)+j\\i\neq 0,1}}^j
		\eta(i)
		.
	\end{align}
\end{Def}

The PMM is a toy model for degenerate diffusion, exhibiting key properties such as the existence of mobile clusters and blocked configurations, as we shall explain more thoroughly later on. This process, along with several of its extensions, has been actively studied in the literature.

In \cite{GLT}, it was shown that the (macroscopic) local particle density is governed by the Porous Medium Equation (PME), $\partial_t \rho = \partial_u (\rho^n \partial_u \rho)$. There, this was formalised via the relative entropy method, assuming initial data bounded away from both $0$ and $1$; and via the entropy method by perturbing the dynamics with the Symmetric Simple Exclusion Process evolving on a subdiffusive time scale. This perturbation ensured the irreducibility of the resulting process, allowing the derivation of the PME for general initial data. All results were obtained in one dimension with periodic boundary conditions.

In \cite{BDGN}, the authors studied the perturbed PMM in a one-dimensional open system, where creation and annihilation of particles occur at the boundaries. Depending on the scaling of the injection and removal rates, Dirichlet, Robin, or Neumann boundary conditions were derived.

The present work is an extension of \cite{s2f}. In that work, the authors first addressed the problem of generalising the dynamics in order to derive the PMM for non-integer values of $n$. Since the process is of exclusion type, this could not be achieved by a purely combinatorial generalisation of the dynamics. Instead, the authors proceeded analytically, viewing the collection $\{\mathbf{p}_{n}\}_{n \geq 0}$ as a monomial basis, and, via the generalised binomial theorem, defined the constraint for a fixed $m \in \mathbb{R}$ as follows:
\begin{align}\label{c:s2f}
	\mathbf{c}(\eta)
    \equiv
    %\sum_{k=0}^{\ell_N}
	% m\binom{m-1}{k}(-1)^k\frac{1}{k+1}\mathbf{p}_{k}(\overline{\eta})
 %    =
    \sum_{k=0}^{\ell_N} \binom{m-1}{k} (-1)^{k} \mathbf{p}_{k}(\overline\eta),
\end{align}
where $2\leq \ell_N\rightarrow+\infty$ as $N\to+\infty$, and we recall that the generalized binomial coefficient is given by the formula
\begin{align}\label{eq:binom}
	\binom{c}{k} = \frac{(c)_k}{k!} = \frac{c(c-1)\cdots(c-(k-1))}{k!},\qquad c\in\mathbb{R}
	.
\end{align}
It turns out that the constraint is well-defined for values of $m \in (0,2]$, and for such values the corresponding hydrodynamic equation is given by $\partial_t \rho = \partial_u^2 \rho^m$, thereby encompassing the Porous Medium Equation (PME) for $m \in (1,2]$, and also the Fast Diffusion Equation in the regime $m \in (0,1)$. It is also worth mentioning that the process coincides with PMM($2$) as $m \nearrow 2$, and with the SSEP as $m \searrow 1$.

The reason the model becomes ill-defined for $m > 2$ is related to the lack of monotonicity in the sequence $(\mathbf{p}_n - \mathbf{p}^{n+1})_{n \geq 0}$, which leads to $\mathbf{c}(\eta) < 0$ for certain configurations $\eta$. In a similar spirit, other diffusivities were derived by considering superpositions of PMMs with different weights satisfying specific growth conditions \cite{CGN24}, though these do not include the diffusion coefficient $D(\rho) = m \rho^{m-1}$ for $m > 2$.

The solution we present in this work is motivated by the collection of models introduced in a companion paper \cite{N24}, which are associated with the Bernstein polynomial basis, in contrast to the PMMs that are based on the standard monomial basis. Fixed any positive integer $n$, next constraint extends the diffusivity $\rho^n\to\rho^{n+m}$ for any $m\in[0,1]$.

\begin{Def}[Interpolating process]\label{def:pmm-gen}
Fix $n,\ell_N\in\mathbb{N}$ and $m\in[0,1]$ such that $2\leq \ell_N\leq N/2-2$, and $\ell_N\to+\infty$ as $N\to+\infty$. Define the generator $\mathcal{L}_N^m$ by replacing in \eqref{generic:gen} the constraint $\mathbf{c}$ by
\begin{align}\label{c}
    \mathbf{c}_{N}^{m}
    &=
    \mathbf{p}_{n}
    +
    \sum_{k=1}^{\ell_N}
    \binom{m}{k}(-1)^k
    \big(\mathbf{p}_{n}-
    \mathbf{p}_{n,k}
    \big)
\end{align}
with auxiliary constraints given by
\begin{align}
    \mathbf{p}_{n,k}
    =\frac{1}{n+k+1}
    \sum_{j=0}^{n+k}
    \mathbf{p}_{n,k}^{j}
\qquad\text{with}\qquad
    \mathbf{p}_{n,k}^{j}
    =
    \frac{\binom{\mathfrak{n}_j^{n+k}}{n}}{\binom{n+k}{n}}
    \mathbf{1}
    \big\{
    \mathfrak{n}_j^{n+k}
    \geq n+1
    \big\}
\end{align}
and where, for each $j$ as above, $\mathfrak{n}_j^{n+k}$ corresponds to the number of particles in the box $\llbracket-j,-j+n+k+1\rrbracket\backslash\{0,1\}$. In what follows, we also shorten 
\begin{align}
	\mathbf{c}_{N}^{m,j}
	=\mathbf{p}_{n}^j
	+
	\sum_{k=1}^{\ell_N}
	\binom{m}{k}(-1)^k
	\big(\mathbf{p}_{n}^j-
	\mathbf{p}_{n,k}^j
	\big).
\end{align}
\end{Def}

The constraint $\mathbf{c}_N^m$ is defined so as to interpolate between the integer Porous Media Models PMM($n$) and PMM($n+1$); it is related to the diffusion coefficient $D(\rho) = \rho^{m-1}$, and it satisfies the so-called gradient property. The latter two properties will be explained shortly. 

Regarding the interpolation, note that one can express
\begin{align}
	    \mathbf{c}_{N}^{m}
	&=\delta_N
	\mathbf{p}_{n}
	+
	m\mathbf{p}_{n,1}
	+
	\sum_{k=2}^{\ell_N}
	\abs{\binom{m}{k}}
	\mathbf{p}_{n,k}
	&&\text{with}\qquad
	\delta_N
	=
	1-\sum_{k=1}^{\ell_N}\abs{\binom{m}{k}},
\end{align}
which in particular shows that the model is well-defined. From $|\binom{m}{k}|\rightarrow0$ as $m\searrow0$ we obtain that $\lim_{m\searrow0}\mathbf{c}_N^m=\mathbf{p}_n$. Because $|\binom{m}{k}|:= 0$ for $m=1$ and any $k\geq 2$ and, remarkably, 
\begin{align}
	\mathbf{p}_{n,1}=\mathbf{p}_{n+1},
\end{align}
it holds that $\lim_{m\nearrow1}\mathbf{c}_N^m=\mathbf{p}_{n+1}$.

The Interpolating Process in the previous definition is classified as a non-cooperative, kinetically constrained, symmetric exclusion process. The non-cooperative aspect means that there exist \textit{mobile clusters} (m.c.)—groups of particles capable of performing excursions across the lattice and transporting mass throughout the system. 

To be more precise, let $\blacksquare$ denote a fixed finite box composed of particles and vacant sites. Representing a particle by $\bullet$ and a hole by $\circ$, we say that $\blacksquare$ constitutes a mobile cluster if it satisfies the following conditions:
	\begin{itemize}[label=$\triangleright$]
		\item \textit{Mobility}: the transitions
		\begin{align}\label{mobility}
			\blacksquare\bullet \leftrightarrow \bullet\blacksquare
			\quad\text{and}\quad
			\blacksquare\circ\leftrightarrow\circ\blacksquare
		\end{align}
		are possible with a finite number of jumps, and independent of the rest of the configuration; 
		\item \textit{Mass transport}: it is always possible for a jump to occur in a node, if there exists a cluster in the vicinity of the respective node, that is
		\begin{align}\label{mtransport}
			\blacksquare\circ\bullet\leftrightarrow\blacksquare\bullet\circ
			\quad\text{and}\quad
			\circ\bullet\blacksquare\leftrightarrow\bullet\circ\blacksquare
		\end{align}
		are always possible with a finite number of steps independently of the rest of the configuration.
	\end{itemize}
The mobile clusters are the collections of m.c.s of \textit{(i)} the PMM($n$), and \textit{(ii)} the processes induced by the constraints $\{\mathbf{p}_{n,k}\}_{1 \leq k \leq \ell_N}$. These correspond to collections of boxes of length at least $n+2$ and no larger than $\ell_N + 2$, containing at least $n+2$ particles and at least one vacant site. These boxes are, indeed, m.c.s because, within each such box, mixing is unconstrained: any pair $\bullet\circ$ (resp. $\circ\bullet$) entirely contained in one of the aforementioned boxes, $\blacksquare$, and located at a node, say $\{0,1\}$, can transition to $\circ\bullet$ (resp. $\bullet\circ$). This is due to the fact that removing $\bullet\circ$ (resp. $\circ\bullet$) from the configuration in $\blacksquare$ leaves a state with $\mathbf{c}_N^m > 0$. 

From this mixing property, the "mobility" and "mass transport" properties described in the previous display follow directly by an appropriate reorganisation of the cluster.
%This is represented in Figure \ref{fig:mc} below.
%
%\textcolor{red}{ADD FIGURE}

Another key property of the process is that it satisfies the gradient condition, as in \cite[Part II Subsection 2.4]{spohn:book}. This means that the algebraic current $\mathbf{j}_{x,x+1} := \tau_x \mathbf{j}_{0,1}$ associated with an arbitrary node $\{x,x+1\}$—defined via $\mathcal{L}_N[\pi_x](\eta) = -\nabla \mathbf{j}_{x,x+1}(\tau_{-1} \eta)$ and identified with $\mathbf{j}_{0,1}(\eta) = \mathbf{c}(\eta)\mathbf{e}_{0,1}(\eta) - \mathbf{c}(\eta)\mathbf{e}_{1,0}(\eta)$—satisfies the equation $\mathbf{j}_{0,1} = \nabla \mathbf{H}$ for some function $\mathbf{H}: \Omega_N \to \mathbb{R}$.

It is well known that this property greatly simplifies the analysis of the process (see \cite{PHD:clement} and references therein). In particular, the diffusion coefficient is given explicitly by $\mathbf{E}_{\nu_\alpha^N}[\mathbf{c}_N^m]$, in contrast to the general case where it is defined via a variational problem \cite[Part II Subsection 2.2]{spohn:book}.

To observe the gradient property, it is important to note that the generator $\mathcal{L}_N^m$ can be decomposed into the elementary processes introduced in \cite{N24}, which we now recall.

\begin{Def}
	 For each integers $0\leq n\leq L $, let $\mathcal{L}_N^{n,L}$ be the generator given by replacing in \eqref{generic:gen} the constraint $\mathbf{c}$ by $\mathbf{b}_{n,L}$, with 
	\begin{align}\label{rate:b}
		\mathbf{b}_{n,L}
		:=\frac{1}{L+1}\sum_{j=0}^{L}
		\mathbf{b}_{n,L}^j
		\quad\text{and}\quad
		\mathbf{b}_{n,L}^j(\eta)
		:=\sum_{j=0}^{L}\mathbf{1}\{\mathfrak{n}_j^{L}=n\}.
	\end{align}
For each $n,L$ we refer to the process associated with $\mathcal{L}_N^{n,L}$ as the Bernstein model, and specify its parameters by writing B($n$,$L$).	
\end{Def}

The Bernstein model was introduced as a toy-model for non-linear diffusivity, generalizing the PMM in the sense that B($n$,$n$)=PMM($n$) and defined in such a way that the underlying dynamics is of gradient type \cite[Proposition 2.7]{N24}. Moreover, it is identified the diffusivity
\begin{align}\label{binom}
	\text{E}_{\nu_\alpha^N}[\mathbf{b}_{n,L}]
	=\binom{L}{n}\alpha^n(1-\alpha)^{L-n}
	=:\text{B}_{n,L}(\rho)
	,
\end{align}
that corresponds to an element of the Bernstein polynomial basis. It is straightforward to verify that
\begin{align}\label{gen:decomp}
    \mathcal{L}_N^{m}
    =
    \delta_N
    \mathcal{L}_N^{n}
    +
    m\sum_{k=1}^{\ell_N}
    \abs{\binom{m}{k}}
    \sum_{\nu=n+1}^{n+k}
    \frac{\binom{\nu}{n}}{\binom{n+k}{k}}
    \mathcal{L}_N^{\nu,n+k},
\end{align}
and for this reason the PMM($m$) must also enjoy the gradient property. Regarding the diffusivity, we can see that 
$\text{E}_{\nu_\alpha^N}[\mathbf{c}_N^m]\xrightarrow{N\to\infty}m\alpha^{m-1}$ uniformly in $\alpha$, since from the inverse binomial transformation and the change of the sign of the generalized binomial coefficient,
\begin{align}
    \text{E}_{\nu_\alpha^N}
    [\mathbf{c}_N^m]
    &=
    m\rho^{n}
    \bigg(1-\sum_{k=1}^{\ell_N}
    \bigg|\binom{m}{k}\bigg|
    \bigg)
    +
    m\sum_{k=1}^{\ell_N}
    \abs{\binom{m}{k}}
    \bigg(
    \alpha^{n}
    -\frac{1}{\binom{n+k}{k}}
    \text{B}_{n,n+k}(\alpha)
    \bigg)
    \\
    &=m\alpha^{n}
    \sum_{k=0}^{\ell_N}
    (-1)^k\binom{m}{k}
    (1-\alpha)^k
    ,
\end{align}
and as $N\to\infty$, from the generalized binomial expansion, the summation in last line above converges uniformly to $\alpha^{m}$. In this way, one expects the local density of particles to be governed by the Porous Media Equation $\partial_t\rho=\partial_u(\rho^{n+m}\partial_u\rho)$.

\subsection{Scaling limit of exclusion KCMs}\label{subsec:kcm}
We are going to investigate the hydrodynamic equations for the local density of particles associated with two classes of models. We introduce some notation.
\begin{Def}
	Fix $N\in\mathbb{N}_+$ and let $\mathcal{L}_N$ be the generator as in \eqref{generic:gen}, associated with a constraint $\mathbf{c}_N:\Omega_N\to\mathbb{R}$, that can be generically expressed, for each $\eta\in\Omega_N$, as
\begin{align}
	\mathbf{c}_N(\eta)
	=\sum_{j=0}^{\ell_N}
	\mathbf{c}_N^j(\eta)
	\where
	\mathbf{c}_N^j(\eta)
	=
	w_{N,j}
	\prod_{w\in A_j}\eta(w)
\end{align}
for appropriate weights $\{w_{j},N\}_{j\geq 0}$ and sets $A_j\subset \T_N$ such that $0,1\notin A_j$, and some positive integer $\ell_N$. 

We introduce $\mathbf{r}_N^j:=(\mathbf{e}_{0,1}+\mathbf{e}_{1,0})\mathbf{c}_{N}^j$ for the rate function for an exchange in $\{0,1\}$ for the component $j$, and similarly introduce $\mathbf{r}_N=\sum_j\mathbf{r}_N^j$ for the contribuition of all the components. Moreover, we write $\mathcal{R}_{N,j}$ as the range of $\mathbf{r}_N^j$, that is, 
\begin{align}
	\mathcal{R}_{N,j}
	:=\bigcap_{A\in A_j^{\wedge}} A
	, &&
	A_j^{\wedge}=
	\{
	A\subset\T_N:\;
	A_j\cup\{0.1\}\subset A,\;A=\llbracket x,x+y\rrbracket\text{ for some } x,y\in\T_N
	\}
	,
\end{align}
and shorten $\underline{r}_{N,j}=\min\{\mathcal{R}_{N,j}\}$.
\end{Def}
\begin{Ass}\label{ass}
The constraints must satisfy the growth conditions
	\begin{align}\label{ass:rate}
		\lim_{N\rightarrow+\infty}
		&\frac1N|\mathbf{r}_N|_{\infty}
		=0,
		\\\label{ass:rate-total}
		\lim_{N\rightarrow+\infty}
		&\frac1N
		\bigg|
		\sum_{j=0}^{N}
		\sum_{z\in\mathcal{R}_{N,j}}
		\tau_z\mathbf{r}_{N}^j
		\bigg|_{\infty}
		<\infty
		.
	\end{align}
There exist some $\mathbf{H}_N$ such that $	\mathbf{c}_{N}(\eta)(\mathbf{e}_{0,1}(\eta)-\mathbf{e}_{1,0}(\eta))
=\nabla\mathbf{H}_{N}(\eta)$. 
Moreover, introducing
\begin{align}\label{ass:h-form}
	\mathbf{h}_{N}&:=\mathbf{H}_N-\mathbf{g}_{N},
	\\\text{with}\quad
	\mathbf{g}_{N}
	&:=-\sum_{j=0}^{N}
	\text{sign}(\underline{r}_{N,j})
	\sum_{i=\min\{0,\underline{r}_{N,j}\}}^{\max\{0,\underline{r}_{N,j}\}}
	\tau^i\mathbf{c}_N^j
	(\mathbf{e}_{i,i+1}-\mathbf{e}_{i+1,i})
	,
\end{align}
it must be that 
\begin{align}\label{ass:h-reg}
	&\lim_{L\to\infty}\lim_{N\to\infty}
	|
	\mathbf{h}_{N}
	-
	\mathbf{h}_{L}
	|_{\infty}=0
	,\\
	&\label{ass:h-bound}
	\exists\overline{h}>0\text{ independent of $N$ such that $|\mathbf{h}_{N}|_{\infty}<\overline{h}$.} 
\end{align}
\end{Ass}
Related to the previous assumptions is the function $\ndef{\Phi_L}:[0,1]\to\mathbb{R}_+$, for each $L\in\mathbb{N}_+$ fixed, given through
\begin{align}\label{phiL}
	\Phi_L(\alpha)
	:=\text{E}_{\nu_{\alpha}^{L^\star}}[\mathbf{h}_{L}]
	,
\end{align}
with $L^\star=|\cup_{j=0^{\ell_L}}\mathcal{R}_{L,j}|$ and any $\alpha\in[0,1]$. From the assumptions in \ref{ass:h-reg} and \ref{ass:h-bound}, the sequence $(\Phi_L)_{L\geq 0}$ converges uniformly to a function \ndef{$\Phi$}, that from the gradient condition we can infer to be non-decreasing and non-negative.

From the decomposition in \eqref{ass:h-form}, we see that $\text{E}_{\nu_{\alpha}^{N}}[\mathbf{h}_{N}] = \text{E}_{\nu_{\alpha}^{N}}[\mathbf{H}_{N}]$. Indeed, we will show that the term $\mathbf{g}_N$ has no macroscopic effect on the density level, even though the sequence $(|\mathbf{g}_N|_{\infty})_{N \geq 1}$ may diverge. We believe that $\mathbf{g}_N$ (modulo lattice translations) constitutes the main error in the approximation $\Phi_N \approx \Phi$, and that, in particular, $\mathbf{h}_N(\eta) \geq 0$.

This is supported by all the models introduced here, and is motivated by the strategy used to verify the gradient property in \cite{N24}, as well as by the approach to defining gradient dynamics in \cite[II Subsection 2.4, discussion just before (2.9)]{spohn:book}. The term $\mathbf{g}_N$ corresponds precisely to translating each component $\mathbf{c}_N^j \mapsto \tau_{-\text{sign}(\underline{r}_{N,j})} \mathbf{c}_N^j$ to the reference site $x = 0$, making them comparable with each other while being independent of the occupation at sites $x < 0$, and each depending only on the occupation at $x = 0, 1$. This is where a combinatorial mechanism arises that characterises the gradient property.

For the Porous Media Model class—namely, the interpolating model introduced here, along with the integer PMM and Bernstein models from \cite{N24}—the quantity $\nabla \mathbf{h}_N$ satisfies a recurrence relation \cite[equation (3.7)]{N24} from which the gradient property can be identified.

The growth and algebraic assumptions on the rates will be refined by additional dynamical conditions, namely, by requiring the model to belong to either Regime I or Regime II, as defined below.

\begin{Def}
	We say that the dynamics induced by $\mathbf{c}_N$ is in the Regime I if 
	\begin{align}\label{I}
		\begin{split}
			(i)\;&\exists\eta\in\Omega_N: 
			\qquad \mathbf{c}_N(\eta)
			=0,
			\\
			(ii)\;&\text{$\exists$ mobile clusters of length $\kappa^\star>1$ such that the transitions \eqref{mobility} and \eqref{mtransport} }\\&\text{occur with positive rate larger than $r_\star$, with both $k^{\star}$ and $r_\star$ independent of $N$}.
		\end{split}	
	\end{align}
	The dynamics is in the Regime II if 
	\begin{align}\label{II}\tag{II}
		\text{$\exists\;\mathfrak{m}>0$ independent of $N$ such that }\;\forall\eta\in\Omega_N,\; 
		\;\mathbf{c}_N(\eta)\geq \mathfrak{m}
		.
	\end{align}
\end{Def}

The introduction of the two previous subclasses is motivated by the slow and fast diffusion regimes in \cite{s2f}, as well as by other technical conditions. Regime \ref{I} encompasses slow diffusion, resulting from the kinetic constraints and the symmetry of the model, while Regime \ref{II} covers fast diffusion under the condition that the primitive of the diffusivity is uniformly bounded from above. 

In Regime \ref{I}, the existence of mobile clusters compensates for local configurations in which no mixing can occur. However, the rate at which these clusters move and mix must be sufficiently large for macroscopic effects to be observed. 

Regime \ref{II} considers irreducible models with rates that are, in some sense, at least as large as those of the SSEP. This permits the derivation of fast diffusion scenarios, constrained by the growth conditions \eqref{ass:rate} and \eqref{ass:rate-total}.

The model in Definition \ref{def:pmm-gen} belongs to the Regime \ref{I}, as the dynamics enjoys m.c.s of any fixed length of at least $n+2$, and the rate for a jump within each m.c. is bounded from below by a positive constant independent of $N$ for m.c.s of length \textit{larger} than $n+2$. We now connect this process with the assumptions above.
\begin{Lemma}\label{lem:grad}
Fixed $\eta\in\Omega_N$, for each $n\in\mathbb{N}$ let $P_{n}(\eta)$ be the number of particles in the box $\llbracket0,n\rrbracket$. There is $\mathbf{H}_N^m:\Omega_N\to\mathbb{R}$ such that $\mathbf{c}_{N}^{m}(\tau_x\eta)(\eta(x+1)-\eta(x))=\nabla\mathbf{H}_{N}^m(\tau_x\eta)$, for each $x\in\T_N$. 

The map $\mathbf{H}_N^m$ is given, explicitly, by $\mathbf{H}_N^m=\mathbf{h}_{\ell_N}^m+\mathbf{g}_{\ell_N}^m$ where
\begin{align}\label{h}
	\mathbf{h}_{\ell_N}^m(\eta)
	&=
	\mathbf{v}_{n}(\eta)
	+
	\sum_{k=1}^{\ell_N}
	\binom{m}{k}(-1)^k
	\big(
	\mathbf{v}_{n}(\eta)-\mathbf{v}_{n,k}\eta)
	\big)
	\\\label{g}
	\mathbf{g}_{\ell_N}^m(\eta)
	&=\frac{1}{n+k+1}\sum_{j=1}^{n+k}
	\sum_{i=1}^{j}
	\mathbf{c}_N^{m,j}(\tau_{-i}\eta)
	(\mathbf{e}_{0,1}(\tau_{-i}\eta)
	-\mathbf{e}_{1,0}(\tau_{-i}\eta)
	)
\end{align}
and
\begin{align}\label{v}
\begin{split}
	\mathbf{v}_{n}(\eta)
	&=\frac{1}{n+1}\prod_{i=0}^{n}\eta(i)
	,\\
	\mathbf{v}_{n,k}(\eta)
	&=
	\frac{1}{n+k+1}
	\sum_{\nu=n+1}^{n+k}
	\frac{\binom{\nu}{n}}{\binom{n+k}{k}}
	\mathbf{1}{
		\big\{
		P_{n+k}(\eta)
		\geq\nu+1
		\big\}
	}
	,
	\qquad\text{for each}\quad
	1\leq k\leq \ell_N
	.
\end{split}
\end{align}
\end{Lemma}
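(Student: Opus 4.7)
The plan is to reduce the gradient identity for $\mathbf{c}_N^m$ to the gradient property already established for each Bernstein model $\mathrm{B}(\nu, L)$ in \cite[Proposition 2.7]{N24}, by leveraging the generator decomposition \eqref{gen:decomp}. Since the defining identity $\mathbf{c}(\eta)(\eta(1)-\eta(0)) = \nabla\mathbf{H}(\eta)$ is linear in the pair $(\mathbf{c},\mathbf{H})$, a candidate $\mathbf{H}_N^m$ is immediately obtained as the weighted sum of Bernstein potentials $\mathbf{H}_{\nu,n+k}$ using the same coefficients $\delta_N$ and $m\,|\binom{m}{k}|\,\binom{\nu}{n}/\binom{n+k}{k}$ that appear in \eqref{gen:decomp}. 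By translation invariance of the constraint, it suffices to verify the identity at $x=0$; the general case then follows by applying $\tau_x$.

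First I would recall, from \cite[equation (3.7)]{N24}, the explicit form of the Bernstein potential $\mathbf{H}_{\nu,L}$ solving $\mathbf{b}_{\nu,L}(\eta)(\eta(1)-\eta(0)) = \nabla\mathbf{H}_{\nu,L}(\eta)$. This potential splits naturally into two contributions: a \emph{symmetric part} anchored at the reference node $\{0,1\}$, essentially an indicator of the event $\{P_L(\eta)\geq\nu+1\}$; and a \emph{translational correction} accounting for the shifts $\tau_{-j}$ used to re-anchor each summand $\mathbf{b}_{\nu,L}^j$ to $j=0$. In the notation of the lemma, these two pieces feed respectively into $\mathbf{v}_{n,k}$ (via \eqref{h}) and into $\mathbf{g}_{\ell_N}^m$ (via \eqref{g}).

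Next I would substitute this splitting into the weighted sum dictated by \eqref{gen:decomp}. The algebraic heart of the argument is the collapse of
\[
\sum_{\nu=n+1}^{n+k}\frac{\binom{\nu}{n}}{\binom{n+k}{k}}\,\mathbf{1}\{P_{n+k}(\eta)\geq\nu+1\},
\]
arising from summing the symmetric parts of the individual Bernstein components, which matches $\mathbf{v}_{n,k}(\eta)$ in \eqref{v} directly; together with the identification of the $\delta_N\mathbf{p}_n$ contribution with $\mathbf{v}_n(\eta) = \frac{1}{n+1}\prod_{i=0}^n\eta(i)$. This is consistent with the identity $\mathbf{p}_{n,1}=\mathbf{p}_{n+1}$ noted in the excerpt, whose gradient counterpart $\mathbf{v}_{n,1}$ recovers the potential of $\mathrm{PMM}(n+1)$. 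The translational corrections assemble into $\mathbf{g}_{\ell_N}^m$ by tracking the prefactor $1/(n+k+1)$, the positions $j$ of the shifts, and the signed jump indicators $\mathbf{e}_{0,1}-\mathbf{e}_{1,0}$.

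The main obstacle is the bookkeeping in this last step: correctly tracking which portions of each $\mathbf{H}_{\nu,n+k}$ feed into the Cauchy-stable part $\mathbf{h}_{\ell_N}^m$ versus the translational remainder $\mathbf{g}_{\ell_N}^m$, and verifying that the combinatorial factors $\binom{\nu}{n}/\binom{n+k}{k}$ combine correctly with the mass-count indicators $\mathbf{1}\{P_{n+k}(\eta)\geq\nu+1\}$ to reproduce formula \eqref{v}. Once this algebraic identification is completed at $x=0$, translation invariance of $\mathbf{c}_N^m$ yields the stated identity for all $x\in\mathbb{T}_N$.
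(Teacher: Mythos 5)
Your proposal is correct and takes essentially the same route as the paper: the paper also deduces $\mathbf{H}_N^m$ by linearity of the gradient identity across the superposition \eqref{gen:decomp}, citing the known potentials for the integer PMM (\cite[Lemma 2.23]{s2f}) and the Bernstein models (\cite[Proposition 2.7]{N24}), and identifying the split $\mathbf{H}_N^m=\mathbf{h}_{\ell_N}^m+\mathbf{g}_{\ell_N}^m$ with the ``anchored'' and translational parts of each component's potential. The paper states this in two lines; your write-up merely spells out the bookkeeping (symmetric part into $\mathbf{v}_{n,k}$, re-anchoring shifts into $\mathbf{g}_{\ell_N}^m$, reduction to $x=0$ by translation invariance) that the paper leaves implicit.
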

The expression for $\mathbf{H}_N$ results from collecting the expressions involved in the gradient property for the integer PMM (see, for instance, \cite[Lemma 2.23]{s2f}) and the Bernstein model \cite[Proposition 2.7]{N24}. In this way, the decomposition \eqref{ass:h-form} is satisfied. The remaining conditions are encapsulated into the next lemma.
\begin{Lemma}\label{lem:c-h-bound}
The model in Definition \ref{def:pmm-gen} satisfies \eqref{ass:h-bound}, \eqref{ass:rate},\eqref{ass:rate-total}, \eqref{ass:h-reg} and \eqref{ass:h-bound}. 
\end{Lemma}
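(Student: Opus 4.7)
The plan is to exploit the non-negative convex decomposition
\[
\mathbf{c}_N^m \;=\; \delta_N\,\mathbf{p}_n \;+\; \sum_{k=1}^{\ell_N}\Bigl|\binom{m}{k}\Bigr|\,\mathbf{p}_{n,k},
\qquad
\mathbf{h}_N^m \;=\; \delta_N\,\mathbf{v}_n \;+\; \sum_{k=1}^{\ell_N}\Bigl|\binom{m}{k}\Bigr|\,\mathbf{v}_{n,k},
\]
with $\delta_N = 1-\sum_{k=1}^{\ell_N}|\binom{m}{k}|\in[0,1]$. Non-negativity of the coefficients rests on the elementary identity $(-1)^k\binom{m}{k}=-|\binom{m}{k}|$ valid for $m\in[0,1]$, a direct consequence of $\binom{m}{k}=m(m-1)\cdots(m-k+1)/k!$ having sign $(-1)^{k-1}$; that $\delta_N\ge 0$ and $\sum_{k=1}^{\ell_N}|\binom{m}{k}|\nearrow 1$ follows from evaluating the generalised binomial series at $(1-1)^m=0$. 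This convex-combination structure is the backbone of all four estimates.

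Next I would handle the uniform bounds \eqref{ass:rate} and \eqref{ass:h-bound}. Since each $\mathbf{p}_n^j\in\{0,1\}$ and each $\mathbf{p}_{n,k}^j\in[0,1]$ (using $\binom{\mathfrak{n}_j^{n+k}}{n}\le\binom{n+k}{n}$ on the event $\{\mathfrak{n}_j^{n+k}\ge n+1\}$), one has $|\mathbf{p}_n|_\infty\le n+1$ and $|\mathbf{p}_{n,k}|_\infty\le 1$, hence $|\mathbf{c}_N^m|_\infty\le n+1$; as $|\mathbf{r}_N|_\infty\le|\mathbf{c}_N^m|_\infty$, this is independent of $N$ and \eqref{ass:rate} is immediate. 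For $\mathbf{h}_N^m$ one has $|\mathbf{v}_n|_\infty\le (n+1)^{-1}$, and the hockey-stick identity $\sum_{\nu=n+1}^{n+k}\binom{\nu}{n} = \binom{n+k+1}{n+1}-1$ combined with $\binom{n+k+1}{n+1}/\binom{n+k}{k}=(n+k+1)/(n+1)$ yields $|\mathbf{v}_{n,k}|_\infty\le (n+1)^{-1}$, so $|\mathbf{h}_N^m|_\infty\le (n+1)^{-1}\le 1$, establishing \eqref{ass:h-bound}. The Cauchy condition \eqref{ass:h-reg} then drops out of a tail estimate: for $N\ge L$,
\[
|\mathbf{h}_N^m-\mathbf{h}_L^m|_\infty \;\le\; \frac{2}{n+1}\sum_{k=\ell_L+1}^{\ell_N}\Bigl|\binom{m}{k}\Bigr|,
\]
which vanishes as $L\to\infty$ uniformly in $N$ by absolute convergence of $\sum_k|\binom{m}{k}|$.

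The hard part will be \eqref{ass:rate-total}. I would start from the triangle inequality $\bigl|\sum_j\sum_z\tau_z\mathbf{r}_N^j\bigr|_\infty \le \sum_j |\mathcal{R}_{N,j}|\,|\mathbf{r}_N^j|_\infty$, and observe that, upon decomposing $\mathbf{c}_N^m$ into its elementary components indexed by $(k,j)$, the $k$-th family contains $n+k+1$ terms, each of weight at most $|\binom{m}{k}|/(n+k+1)$ and range of size at most $n+k+2$, giving the total bound
\[
C(n)\;+\;\sum_{k=1}^{\ell_N}\Bigl|\binom{m}{k}\Bigr|(n+k+2)
\;\le\;C'(n)\;+\;\sum_{k=1}^{\ell_N}k\,\Bigl|\binom{m}{k}\Bigr|.
\]
For $m\in(0,1)$ the classical asymptotic $|\binom{m}{k}|\sim \Gamma(-m)^{-1}k^{-m-1}$ yields $\sum_{k=1}^{\ell_N}k|\binom{m}{k}|=O(\ell_N^{1-m})\le O(\ell_N)=O(N)$; the boundary cases $m\in\{0,1\}$ give a trivially finite sum, since almost all terms vanish. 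Dividing by $N$ yields the required $O(1)$ bound. The main technical obstacle is the combinatorial bookkeeping in this last step, namely verifying that the decomposition of each $\mathbf{p}_{n,k}^j$ into elementary monomials (with possibly signed weights, as allowed by the setup) preserves the stated support and weight bounds, so that the triangle-inequality estimate above captures all the contributions.
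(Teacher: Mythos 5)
Your proposal is correct and, in substance, follows the same route as the paper's own proof: the absolute-convergence and tail bound for $\sum_k \lvert\binom{m}{k}\rvert$ handles \eqref{ass:rate}, \eqref{ass:h-bound} and \eqref{ass:h-reg}; the Hockey-Stick identity gives the uniform bound $\lvert\mathbf{v}_{n,k}\rvert_\infty\le (n+1)^{-1}$; and the decay $\lvert\binom{m}{k}\rvert\asymp k^{-m-1}$ gives $\sum_{k\le\ell_N}k\lvert\binom{m}{k}\rvert=O(\ell_N^{1-m})=o(N)$, which is exactly the estimate the paper performs for \eqref{ass:rate-total}. The one genuinely different stylistic choice is that you make the non-negative convex decomposition $\mathbf{c}_N^m=\delta_N\mathbf{p}_n+\sum_k\lvert\binom{m}{k}\rvert\mathbf{p}_{n,k}$ (and the analogous one for $\mathbf{h}_N^m$) the organising device, which buys slightly cleaner constants (e.g.\ $\lvert\mathbf{h}_N^m\rvert_\infty\le(n+1)^{-1}$ rather than the paper's cruder $2\sum_k\lvert\binom{m}{k}\rvert$-type bound), whereas the paper works directly with the alternating form and the triangle inequality.

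On the ``main technical obstacle'' you flag at the end (decomposing each $\mathbf{p}_{n,k}^j$ into monomials with potentially signed, potentially large weights): this concern is real if one insists on a literal monomial decomposition of the form $w_{N,j}\prod_{w\in A_j}\eta(w)$ term by term, but it is avoidable and the paper avoids it. Since the decomposition in Assumption~\ref{ass} is flexible, one may index the blocks by the pairs $(k,j)$ directly, taking $\mathbf{c}_N^{(k,j)}$ to be the whole contribution $\lvert\binom{m}{k}\rvert\,\mathbf{p}_{n,k}^j/(n+k+1)$, which is non-negative, uniformly bounded by $\lvert\binom{m}{k}\rvert/(n+k+1)$, and supported on a window of size at most $n+k+2$. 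With this choice the triangle-inequality estimate you wrote, $\sum_j\lvert\mathcal{R}_{N,j}\rvert\,\lvert\mathbf{r}_N^j\rvert_\infty\lesssim\sum_k\lvert\binom{m}{k}\rvert(n+k+2)$, goes through with no signed-coefficient bookkeeping. This is precisely what the paper's displayed bound on \eqref{ass:rate-total} does, so your argument closes once you grant yourself that coarser block decomposition.
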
 
\begin{proof}
We start by recalling from \cite[Lemma A.1]{s2f} that for any $ k\geq 2 $ it holds that
\begin{align}\label{bin:bound}
%	\frac{\beta_k(m)}{(k+1)^{m+1}}
%	<
	\abs{\binom{m}{k}}
	<
	\frac{|\Gamma(m+1)\sin(\pi(k-m-1))|}{\pi(k-m-1)^{m+1}}
\end{align}
with $\Gamma$ the Gamma-function.

We show \eqref{ass:rate} and \eqref{ass:rate-total}. Recalling the constraints as in \eqref{c}, we can bound from above 
	\begin{align}
		|\mathbf{c}_{N}^m|_{\infty}
		&\leq 
		1+2\sum_{k=1}^{\ell_N}
		\abs{\binom{m}{k}}
        ,
	\end{align}
therefore the upper bound in \eqref{bin:bound} concludes the proof. For \eqref{ass:rate-total}, we compute
\begin{align}
	\bigg|
	\sum_{j=0}^{N}
	\sum_{z\in\mathcal{R}_{N,j}}
	\tau_z\mathbf{r}_{N}^j
	\bigg|_{\infty}
	&\leq
	2n
	+
	\sum_{k=1}^{\ell_N}
	\abs{\binom{m}{k}}
	\bigg\{
	\sum_{i=-n}^{n+2}
	\big|
	\tau_{i}\mathbf{p}_{n}
	\big|_{\infty}	
	+
	\sum_{i=-(n+k)}^{n+k+2}
	\big|
	\tau_{i}\mathbf{p}_{n,k}
	\big|_{\infty}	
	\bigg\}
	\\&\leq
	4n+2
	+\sum_{k=1}^{\ell_N}
	\abs{\binom{m}{k}}
	(n+k+2),
\end{align}	
and because $\ell_N\leq N$ and $m\in(0,1)$, the quantity in the right-hand side above is of order $o(N)$.	

To see \eqref{ass:h-reg} and \eqref{ass:h-bound}, from the Hockey-Stick identity one can estimate $|\mathbf{v}_{n,k}|\leq \tfrac{1}{n+1}$, thus 
	\begin{align}\label{h:bound}
		|
		\mathbf{h}_{\ell_N}
		|_\infty
		&\leq 
		2\sum_{k=1}^{\ell_N}
		\abs{\binom{m}{k}},
		\\
		|
		\mathbf{h}_{\ell_N}^m
		-
		\mathbf{h}_{L}^m
		|_\infty
		&\leq 
		2\sum_{k=L+1}^{\ell_N}
		\abs{\binom{m}{k}},
		&&\text{for $L<\ell_N$},
	\end{align}
and to conclude it is enough to invoke \eqref{bin:bound}, and note that from an integral comparisson the quantity in the last line above is of order $O(|1/L-1/\ell_N|)$.
\end{proof}

\subsection{Topological setting}\label{subsec:topo}
We now present the analysis context of this work. Fix a finite time horizon $[0,T]$, let $\mu_N$ be an initial probability measure on $\Omega_N$, and let $\{\eta_{N^2t}\}_{t\geq 0}$ be the process generated by $\mathfrak{L}_{N}$ where, for $\mathcal{L}_N$ in the Regime \ref{I},
\begin{align}
	\mathfrak{L}_N=
	N^2\mathcal{L}_N
	+
	\mathfrak{p}_NN^2\mathcal{L}_N^{\text{SSEP}},
	&&
	\begin{cases}
		\mathfrak{p}_NN^2\rightarrow +\infty,\\
		0<\mathfrak{p}_N\rightarrow 0,
	\end{cases}
	\quad
\text{as } N\to+\infty,
\end{align}
with $\mathcal{L}_N$ as in \eqref{generic:gen} and $\mathcal{L}_N^{\text{SSEP}}$ the generator of the Symmetric Simple Exclusion process (corresponding to \eqref{generic:gen} for $\mathbf{c}(\eta)=1$). For $\mathcal{L}_N$ in the Regime \ref{II}, $\mathfrak{L}_N$ is defined analogously with $\mathfrak{p}_N=0$.

The object connecting the microscopic and macroscopic scales is the \textit{empirical measure}, \ndef{$ \pi^N $}, the random measure given by 
\begin{align*}
    \pi^N(\cdot,\mathrm{d}u)
    =\frac1N \sum_{
    	x\in\T_N}
    \delta_{\tfrac{x}{N}}
    (\mathrm{d}u)\pi_x(\cdot)
    ,
\end{align*}
where $ \delta_{v} $ is the Dirac measure at $v\in\mathbb{T}$. For each $\eta\in\Omega_N$ fixed, its (diffusive) time evolution is defined as $ \pi^N_t(\eta,\rmd u)=\pi^N(\eta_{N^2t},\rmd u)$;  and for any function $ G:\mathbb{T}\to\mathbb{R} $ we shorten the integral of $ G $ with respect to the empirical measure as
\begin{align}\label{int:emp}
		\pi_t^N(\eta,G)
		=\int_{\mathbb{T}}G(u)\pi_t^N(\eta,\rmd u)
		.
\end{align}

We now recall standard definitions and introduce notation. Denote by $\ndef{\mathcal{M}_+}$ the space of positive measures on $[0,1]$ with total mass at most $1$ and endowed with the weak topology. The Skorokhod space of trajectories induced by $\{\eta_{N^2t}\}_{t\in[0,T]}$ with initial measure $\mu_N$ is denoted by $\ndef{\mathcal{D}([0,T],\Omega_N)}$, and we denote by $\ndef{\mathbb{P}_{\mu_N}}$ the induced probability measure on it. Moreover, $\ndef{\mathbb{Q}_N}:=\mathbb{P}_{\mu_N}\circ(\pi^N)^{-1}$ is the probability measure on $\ndef{\mathcal{D}([0,T],\mathcal{M}_+)}$ induced by $\{\pi^N_t\}_{t\in[0,T]}$ and $\mu_N$. For $f,g\in L^2(\mathbb{T})$, we denote by $\ndef{\langle f,g\rangle}$ their standard Euclidean product in $L^2(\mathbb{T})$ and $\ndef{\|\cdot\|_{2}}$ its induced norm.

We now aim to introduce the relevant weak formulation of the hydrodynamic equation considered here. To that end, for any pair $G,H\in C^\infty(\mathbb{T}^d)$ let $\ndef{\inner{G,H}_1}=\inner{\partial_u G,\partial_u H}$ be their semi inner-product on $C^\infty(\mathbb{T})$, and $\ndef{\norm{\cdot}_1}$ its associated semi-norm. The space $\ndef{\mathcal{H}^1(\mathbb{T})}$ is the Sobolev space on $\mathbb{T}$, defined as the completion of $C^\infty(\mathbb{T})$ for the norm $\ndef{\norm{\cdot}_{\mathcal{H}^1(\mathbb{T})}}^2=\norm{\cdot}_{L^2}^2+\norm{\cdot}_{1}^2$. We write as $\ndef{L^2([0,T];\mathcal{H}^1(\mathbb{T}))}$ the set of measurable functions $f:[0,T]\to\mathcal{H}^1(\mathbb{T})$ such that $\int_0^T\norm{f_s}^2_{\mathcal{H}^1(\mathbb{T})}\,\mathrm{d}s<\infty$.

The next definition corresponds to the notion of weak solution, and will be associated with the Regime \ref{I}.

\begin{Def}[Weak solution I]\label{def:weak}
    For $ \rho^{\rm ini}:\mathbb{T}^d\to[0,1] $ a measurable function, we say that $ \rho:[0,T]\times \mathbb{T}\mapsto [0,1] $ is a weak solution of the equation
        \begin{align}\label{hydro-eq}
		\begin{cases}
		    \partial_t\rho
            =\partial_u^2\Phi(\rho), & \text{in } (0,T]\times\T,\\
            \rho_0=\rho^{\text{ini}},& \text{in }\mathbb{T}
		\end{cases} 
	\end{align}
    if
	\begin{enumerate}
        \item $ \Phi(\rho)\in L^2([0,T];\mathcal{H}^1(\mathbb{T})) $;
		\item for any $ t\in[0,T] $ and $ G\in C^{1,2}([0,T]\times \mathbb{T}) $, $\rho$ satisfies the formulation $\mathfrak{F}_t(\rho^{\rm ini},\rho,G)=0$, where   
		\begin{equation}\label{weak}
			\mathfrak{F}_t(\rho^{\rm ini},\rho,G)
			:=
			\inner{\rho_t,G_t}-\inner{\rho^{\rm ini},G_0}
			-\int_0^t
			\bigg\{
			\inner{\rho_s,\partial_sG_s}
			+
                \inner{\Phi(\rho_s),\partial_u^2G_s}
			\bigg\}
			\rmd s
			.
		\end{equation}
	\end{enumerate}
\end{Def}
The next notion of solution is also known as very weak solution \cite{vazquez}.
\begin{Def}\label{def:very-weak}(Weak solution II)
	For $ \rho^{\rm ini}:\mathbb{T}^d\to[0,1] $ a measurable function, we say that $ \rho:[0,T]\times \mathbb{T}\mapsto [0,1] $ is a very weak solution of the equation \eqref{hydro-eq} if $\rho\in L^2([0,T],\mathcal{H}^1(\T))$ and (2) in the previous definition are satisfied.
	
\end{Def}

\subsection{Main Result}\label{subsec:hl}
The hydrodynamic limit establishes a weak law of large numbers. Precisely, that starting from a \textit{local equilibrium distribution}, the empirical measure converges weakly to an absolutely continuous measure, whose density is the unique solution of the hydrodynamic equation \eqref{hydro-eq}. Indirectly, this establishes the existence of solutions of \eqref{hydro-eq} in the sense of Definitions \ref{def:weak} and \ref{def:very-weak}. We aim to present this statement rigorously.
\begin{Def}[Local equilibrium distribution]\label{def:ass}
	Let $ \{\mu_N\}_{N\geq 1} $ be a sequence of probability measures on $ \Omega_N $, and let $g:\mathbb{T}\to[0,1] $ be a measurable function. If, for any continuous function $ G:\mathbb{T}\to\mathbb{R} $ and every $ \delta>0 $, it holds
	\begin{align*}
		\lim_{N\to+\infty}\mu_N
		\left(
		\eta\in\Omega_N
		:\big|\pi^N(G)-\inner{g,G}\big|>\delta
		\right)
		=0,
	\end{align*}
	we say that the sequence $ \{\mu_N\}_{N\geq 1} $ is a local equilibrium measure associated with the profile $ g $.
\end{Def}

\begin{Th}[Hydrodynamic limit] \label{th:hydro}
	Let $ \rho^{\rm ini}:\mathbb{T}\to[0,1] $ be a measurable function and let $ \{\mu_N\}_{N\geq 1} $ be a local equilibrium measure associated with it.
	Then, for any $ t\in [0,T] $ and $ \delta>0 $, it holds
	\begin{align*}
		\lim_{N\to+\infty}
		\mathbb{P}_{\mu_N}
		\left(
		\big|\pi_t^N(G)-\inner{\rho_t,G}\big|>\delta
		\right)=0,
	\end{align*}
	then $ \rho $ is the unique solution of the hydrodynamic equation \eqref{hydro-eq} with initial data $\rho^{\rm ini}$, in the sense of Definition \ref{def:weak}, if $\mathcal{L}_N$ belongs to the Regime \ref{I}; or in the sense of Definition \ref{def:very-weak}, if $\mathcal{L}_N$ belongs to the Regime \ref{II}.
\end{Th}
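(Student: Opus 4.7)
The plan is to follow the standard entropy/replacement strategy. For each test function $G\in C^{1,2}([0,T]\times\mathbb{T})$, Dynkin's formula produces the martingale
\[
M_t^N(G) = \pi_t^N(G_t) - \pi_0^N(G_0) - \int_0^t \langle \pi_s^N, \partial_s G_s\rangle\, ds - \int_0^t \mathfrak{L}_N \pi_s^N(G_s)\, ds.
\]
The gradient condition $\mathbf{c}_N(\tau_x\eta)(\eta(x+1)-\eta(x)) = \nabla \mathbf{H}_N(\tau_x\eta)$ combined with two discrete summations by parts (and the smoothness of $G$) rewrites
\[
\int_0^t \mathfrak{L}_N \pi_s^N(G_s)\, ds = \int_0^t \frac{1}{N}\sum_{x\in\T_N} \partial_u^2 G_s\!\big(\tfrac{x}{N}\big)\big[\mathbf{H}_N(\tau_x\eta_s) + \mathfrak{p}_N \eta_s(x)\big]\, ds + o_N(1),
\]
the SSEP contribution $\mathfrak{p}_N\langle\pi_s^N,\partial_u^2 G_s\rangle$ vanishing in the limit since $\mathfrak{p}_N\to 0$. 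The quadratic variation of $M_t^N(G)$ is bounded by $CT\|\partial_u G\|_\infty^2\, |\mathbf{r}_N|_\infty/N$, which vanishes by \eqref{ass:rate}, so $M_t^N(G)\to 0$ in $L^2$. Tightness of $\{\mathbb{Q}_N\}$ on $\mathcal{D}([0,T],\mathcal{M}_+)$ follows from Aldous' criterion, the required control on time oscillations being secured by \eqref{ass:rate-total} and \eqref{ass:h-bound} after unwinding the telescoping structure of $\mathbf{g}_N$ with an additional summation by parts.

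The crux is to show that any subsequential limit concentrates on trajectories $\pi_t(\mathrm{d}u)=\rho_t(u)\mathrm{d}u$ satisfying $\mathfrak{F}_t(\rho^{\rm ini},\rho,G)=0$, which reduces to the replacement
\[
\limsup_{\epsilon\to 0}\limsup_{N\to\infty} \mathbb{E}_{\mu_N}\!\bigg[\bigg|\int_0^t \frac{1}{N}\sum_x \partial_u^2 G_s(\tfrac{x}{N})\big[\mathbf{H}_N(\tau_x\eta_s) - \Phi(\eta_s^{\epsilon N}(x))\big]\, ds\bigg|\bigg] = 0,
\]
where $\eta^\ell(x)$ denotes the box-average over $\ell$ sites centred at $x$. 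The decomposition $\mathbf{H}_N = \mathbf{h}_N + \mathbf{g}_N$ from \eqref{ass:h-form} is exploited in two complementary ways. The current-like structure of $\mathbf{g}_N$ permits one further summation by parts, after which its contribution is bounded by $C\|G'''\|_\infty |\mathbf{r}_N|_\infty/N = o_N(1)$ via \eqref{ass:rate-total}, despite $|\mathbf{g}_N|_\infty$ possibly diverging. The term $\mathbf{h}_N$ is first approximated uniformly by $\mathbf{h}_L$ at fixed $L$ using \eqref{ass:h-reg}; then the classical one-block and two-block estimates of Section \ref{sec:rep-lem} replace $\mathbf{h}_L$ by $\Phi_L(\eta^{\epsilon N}(x))$; finally one sends $L\to\infty$ so that $\Phi_L\to\Phi$ uniformly. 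In Regime \ref{II} the pointwise bound $\mathbf{c}_N\geq\mathfrak{m}$ powers the Guo--Papanicolaou--Varadhan argument directly; in Regime \ref{I} one instead combines the mobile-cluster mechanism (length $\kappa^\star$, rate $r_\star$) with the SSEP perturbation ($\mathfrak{p}_N N^2\to\infty$) to secure local ergodicity on boxes of size $\epsilon N$ while preserving the macroscopic equation.

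The energy estimate of Section \ref{sec:energy} then yields $\Phi(\rho)\in L^2([0,T];\mathcal{H}^1(\mathbb{T}))$ in Regime \ref{I} and $\rho\in L^2([0,T];\mathcal{H}^1(\mathbb{T}))$ in Regime \ref{II}, placing any limit point in the appropriate weak-solution class of Definition \ref{def:weak} or \ref{def:very-weak}. Uniqueness of such solutions, also a consequence of the energy bound, upgrades convergence in distribution to convergence in probability and closes the argument.

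The main obstacle is the replacement step, and specifically two aspects of it: (i) controlling $\mathbf{g}_N$ without a uniform sup-norm bound, which requires cashing in the explicit telescoping structure of \eqref{ass:h-form} rather than using crude estimates; and (ii) executing the one-block estimate in Regime \ref{I}, where no pointwise lower bound on the rates is available, so that local ergodicity must be extracted from the delicate interplay between the mobile-cluster mixing mechanism and the vanishing SSEP perturbation of strength $\mathfrak{p}_N$, while keeping the perturbation macroscopically invisible.
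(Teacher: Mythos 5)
Your overall architecture — Dynkin's martingale, gradient condition, tightness via Aldous, replacement by $\Phi_L$ using one- and two-block estimates, energy estimate, uniqueness — matches the paper's entropy-method scheme, and most of the outline is accurate. However, there is a genuine gap in your treatment of $\mathbf{g}_N$. You claim that the ``current-like structure of $\mathbf{g}_N$ permits one further summation by parts, after which its contribution is bounded by $C\|G'''\|_\infty|\mathbf{r}_N|_\infty/N = o_N(1)$.'' This does not work. Each component of $\mathbf{g}_N$ has the form $\mathbf{c}_N^j(\tau_{x+i}\eta)\big[\eta(x+i)-\eta(x+i+1)\big]$; an Abel summation moves the discrete gradient off $\eta$, but by the Leibniz rule it lands \emph{both} on the test function (producing the acceptable $\|G'''\|_\infty/N$ factor) \emph{and} on the constraint, yielding a term proportional to $\mathbf{c}_N^j(\tau_{y}\eta)-\mathbf{c}_N^j(\tau_{y-1}\eta)$. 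That difference is not small — it is of order $|\mathbf{c}_N^j|_\infty$ — so the purely deterministic summation by parts leaves an $O(1)$ remainder and does not kill $\mathbf{g}_N$. The fact that $|\mathbf{g}_N|_\infty$ may diverge is precisely the point the paper flags: it means no sup-norm argument can succeed here.

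The paper instead controls $\mathbf{g}_N$ \emph{stochastically}, via the entropy inequality, the Feynman--Kac bound and the Dirichlet form (Proposition~\ref{prop:base} together with Lemma~\ref{lem:rest}). The key mechanism is the identity \eqref{by-parts}: since $\eta(i)-\eta(i+1)$ is antisymmetric under the exchange $\theta_{i,i+1}$ and $\nu_\alpha^N$ is reversible, one may integrate by parts relative to $\nu_\alpha^N$ rather than relative to the lattice index, which produces a factor $\nabla_{i,i+1}[\sqrt{f}]$ that is absorbed into the Dirichlet form at a cost controlled by $\underline{r}_N/N$ (the quantity in \eqref{ass:rate-total}). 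This is a fundamentally different mechanism from the deterministic one you propose, and it is necessary. A secondary, more minor inaccuracy: in Regime~I the one-block estimate (Lemma~\ref{lem:1block}) relies only on the SSEP perturbation $\mathfrak{p}_N N^2 \to \infty$, while the mobile-cluster mechanism (length $\kappa^\star$, minimal rate $r_\star$) enters only the two-block estimate (Lemma~\ref{lem:2block}), where one must transport mass over macroscopic distances $\varepsilon N$; your description blends these two roles.
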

%%%%%%%%%%%
\subsubsection{Comments on the proof}
The proof of Theorem \ref{th:hydro} introduces some novelties compared to previous works. In \cite{s2f}, technical difficulties arose due to the constraints (see \eqref{c:s2f}) depending on $N$ through $\ell_N$. A question posed and resolved there was whether $\ell_N$ needs to satisfy some growth condition. The short answer is no. To show this, the authors perform several truncations of the summation in \eqref{c:s2f} at different steps of the proof, replacing $\ell_N$ with specific functions at the macroscopic and microscopic levels in order to "slow down" the divergence $\ell_N \to \infty$ at critical points in the argument.

This alone is not enough to allow $\ell_N$ to grow without constraint. To complement this, in \cite[Lemma 3.6]{s2f}—the counterpart of \eqref{eq:psi-eps} in the present work—a parameter is introduced and subsequently fixed in the application of the so-called "2-blocks estimate". This corresponds precisely to replacing, in \eqref{eq:psi-eps}, $\varepsilon$ by $\varepsilon/m$ for each $m$.

This strategy works because the tail of the series representation of $\rho^{m-1}$ is sufficiently light. This is further explored in \cite{CG24}, where the authors provide more precise information for diffusion coefficients expressible in series form, beginning by identifying the PMM with a monomial basis.

In \cite{N24}, it was explained that PMMs are inadequate for modelling diffusivities with roots of high multiplicity, leading to a class of diffusivities that cannot be obtained from previous works, which rely on the PMM as a starting point. To address this gap, \cite{N24} introduced the Bernstein model, \eqref{rate:b}. One can then see that the previous arguments used to prove the 2-block estimate (forthcoming Lemma \ref{lem:2block}) must be extended to account for diffusivities that degenerate at both $0$ and $1$. 

Our goal, therefore, was to decouple the proof of Theorem \ref{th:hydro} from the choice of basis, and to generalise the arguments in the 2-block estimate to encompass other types of diffusion. To do so independently of the specific expression of the rates, we identified the necessary properties more generally. This led to the simpler technical conditions stated in Assumption \ref{ass}, along with dynamical conditions of either irreducibility or the presence of mobile clusters.

\section{Proof of the Hydrodynamic Limit}\label{sec:hl}

In order to prove Theorem \ref{th:hydro}, we apply the entropy method, first introduced in \cite{entropy}, following the method as presented in \cite{s2f}. Let us overview the approach.

\subsection{General scheme}
The link between our process and the weak formulation \eqref{weak} is given through Dynkin's martingale (see \cite[Appendix 1, Lemma 5.1]{KL:book}),
\begin{align}\label{eq:M}
	\text{M}_N^G(t)%(\pi^N_{\cdot})
	:=\pi_t^N(\eta,G_t)-\pi_0^N(\eta,G_0)
	&-\int_0^t
	\pi_s^N(\eta,\partial_sG_s)+\mathcal L[\pi_s^N](\eta,G_s)\rmd s
	% &-\int_0^t
	% (\partial_s+\mathcal L^\mc)	\pi_s^N(\eta,G_s)\rmd s.
% \\&\int_{\T}
%     G(\mathbf{u})
%     \mathcal{L}^{k^{\star}}[\pi_s^N(\cdot,\rmd \mathbf{u})]
\end{align}
for $G\in C^{2,1}(\mathbb{T}\times [0,T])$. Indeed, one can show that the sequence of probability measures $ (\mathbb{Q}_N)_{N\in\mathbb{N}} $ is tight with respect to the Skorokhod topology of $ \mathcal{D}\left([0,T],\mathcal{M}_+\right) $ by resorting to Aldous' conditions \cite[proof of Proposition 4.1]{GMO22}, that can be shown to be satisfied due to the the quadratic variation of $\text{M}_N^G(T)$ vanishing as $N\to\infty$, which is consequence of Assumption \ref{ass}. This is analysed in Subsection \ref{subsec:tight}. We present the proof for the Regime \ref{I}, but we note that in Regime \ref{II}, that is, withouth the perturbation term $\mathfrak{p}_N$, is identical.

With this, we conclude that the sequence of empirical measures is tight, existing then weakly convergent subsequences. Since there is at most one particle per site, one can show that the limiting measure of a convergent subsequence of $(\mathbb{Q}_N)_{N\geq 0}$, that we write as \ndef{$\mathbb{Q}$}, is concentrated on paths of absolutely continuous measures with respect to the Lebesgue measure. This means precisely that the sequence $ (\pi_\cdot^{N}(\eta,\rmd u))_{N\in\mathbb{N}} $ converges weakly, with respect to $ \mathbb{Q}_N $, to an absolutely continuous measure with density \ndef{$\rho$}, that is, $ \pi_{\cdot}(\rmd u)=\rho_\cdot(u)\rmd u $. 

Provided the aforementioned convergence by subsequences and absolutely continuity of the limit measure, we now argue that the density $\rho$ satisfies the formulation in Definition \ref{def:weak}. This encompasses two results to be held $\mathbb{Q}$ almost-surely: \textit{\textbf{(i)}} satisfaction of the weak formulation $\mathcal{F}_t(\rho^{\text{ini}},\rho,G)=0$; and \textit{\textbf{(ii)}} regularity: $\Phi(\rho)\in L^2([0,T];\mathcal{H}^1(\T))$, for the Regime \ref{I}, while $\rho\in L^2([0,T];\mathcal{H}^1(\T))$ for the Regime \ref{II}. In this work we present the proofs for the Regime \ref{I} only, being the more technically involved. Regime \ref{II} accounts for many simplifications, that are going to be explained in Remarks \ref{rem:fde-char} and \ref{rem:fde-energy}.

This proves that for any limit point $ \mathbb{Q} $ of $ (\mathbb{Q}_N)_{N\in\mathbb{N}} $ it holds, for the Regime \ref{I}, 
    \begin{align}
    \mathbb{Q}
    \bigg(\pi_{\cdot}\in\mathcal{D}([0,T],\mathcal{M}_+) :
    \;
        \begin{cases}
    	\Phi(\rho)\in L^2([0,T];\mathcal{H}^1(\mathbb{T})) 
            \\
            \mathfrak{F}_t(\rho^{\rm ini},\rho,G) =0 
            ,\;
            \forall t\in[0,T],\;
            \forall G\in C^{1,2}([0,T]\times\mathbb T)
        \end{cases}	\bigg)
	=1,
    \end{align}
where  $\mathfrak{F}_t(\rho^{\rm ini},\rho,G)$  is given in  \eqref{weak}; and also in the Regime \ref{II} the above with $\Phi(\rho)$ replaced by $\rho$. The subsections \ref{subsec:tight} and  \ref{subsec:char} rely on estimates provided in Section \ref{sec:rep-lem}, the so-called replacement lemmas.

Concluding the proof, the regularity of either $\rho$ or $\Phi(\rho)$, depending on the regime, implies that there is at most one weak solution, meaning that the previous convergence by subsequences can be taken as a complete convergence. For the Regime \ref{I} this is very straightforward by Oleinik's method (see, for instance, \cite[Subsection $7.1$]{BDGN}), and for the Regime \ref{II} it is more involved, but completely analogous to the uniqueness result in the fast diffusion regime of \cite[Appendix B]{s2f} by recalling that $(\Phi_L)_L$, as defined in \eqref{phiL}, converges uniformly to $\Phi$, and noting that because the dynamics is discrete, of exclusion and gradient types, $\Phi_L(\rho)$ is a non-increasing polynomial in $\rho\in[0,1]$. 
%%%%%%%%%%%%%%%%%%%%%%%

\subsection{Tightness}\label{subsec:tight}
In the present subsection we are going to show the following.
\begin{Prop}
    The sequence of measures $(\mathbb{Q}_N)_{N\geq 1}$ is tight.
\end{Prop}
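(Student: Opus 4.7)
The plan is to verify Aldous' criterion for the sequence $(\mathbb{Q}_N)_{N\geq 1}$ on $\mathcal{D}([0,T],\mathcal{M}_+)$. Since $|\pi_t^N(G)|\leq |G|_\infty$ for any $G\in C(\T)$, the marginals of $\mathbb{Q}_N$ are automatically relatively compact, so it suffices to show that for every smooth $G$ and every $\eps>0$,
\begin{align}
\lim_{\delta\to 0}\limsup_{N\to\infty}\sup_{\substack{\tau\text{ stop. t.}\leq T\\ \theta\leq\delta}}\mathbb{P}_{\mu_N}\big(|\pi^N_{\tau+\theta}(G)-\pi^N_{\tau}(G)|>\eps\big)=0.
\end{align}
The starting point is Dynkin's martingale \eqref{eq:M} with $G$ time-independent, which decomposes $\pi^N_{\tau+\theta}(G)-\pi^N_{\tau}(G)$ into a martingale increment $M^G_N(\tau+\theta)-M^G_N(\tau)$ and a drift $\int_\tau^{\tau+\theta}\mathfrak{L}_N[\pi^N_s(G)]\,ds$. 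By Markov's inequality it is enough to bound each term.

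For the drift term, the gradient condition gives $\mathcal{L}_N[\pi_x](\eta)=\mathbf{j}_{x-1,x}(\eta)-\mathbf{j}_{x,x+1}(\eta)$ with $\mathbf{j}_{x,x+1}=\tau_x\nabla\mathbf{H}_N$. Summing by parts twice, one gets
\begin{align}
N^2\mathcal{L}_N[\pi^N_s(G)]=\frac{1}{N}\sum_{x\in\T_N}(\Delta_NG)(x/N)\,\mathbf{H}_N(\tau_x\eta_s),
\end{align}
where $\Delta_N G\to\partial_u^2 G$ uniformly. Using the decomposition $\mathbf{H}_N=\mathbf{h}_N+\mathbf{g}_N$ of \eqref{ass:h-form}, the part carrying $\mathbf{h}_N$ is controlled pointwise by $|\partial_u^2 G|_\infty\,\overline{h}$ thanks to \eqref{ass:h-bound}. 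The contribution of the SSEP perturbation reduces, after summation by parts, to $\mathfrak{p}_N$ times a uniformly bounded quantity, and vanishes since $\mathfrak{p}_N\to 0$. Altogether $|\int_\tau^{\tau+\theta}\mathfrak{L}_N[\pi^N_s(G)]\,ds|\leq C(G)\theta\leq C(G)\delta$, uniformly in $N$.

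For the martingale term, Doob's inequality reduces the problem to bounding $\mathbb{E}_{\mu_N}\langle M^G_N\rangle_T$. A direct computation of the carré du champ yields
\begin{align}
\langle M^G_N\rangle_T=\int_0^T\sum_{x\in\T_N}\big(G((x+1)/N)-G(x/N)\big)^2\big(\mathbf{r}_N(\tau_x\eta_s)+\mathfrak{p}_N\mathbf{e}_{x,x+1}^{\text{sym}}(\eta_s)\big)\,ds,
\end{align}
which is bounded by $T|\partial_uG|_\infty^2\big(|\mathbf{r}_N|_\infty/N+\mathfrak{p}_N\big)$. This vanishes as $N\to\infty$ by \eqref{ass:rate} and $\mathfrak{p}_N\to 0$. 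Combining the two bounds verifies Aldous' criterion.

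The main obstacle I anticipate is the drift term: although the assumption \eqref{ass:h-bound} yields a clean bound on the piece involving $\mathbf{h}_N$, the companion term $\mathbf{g}_N$ from \eqref{ass:h-form} is in general not uniformly bounded in $N$. The idea is not to estimate $\mathbf{g}_N$ pointwise, but rather to regroup $\frac{1}{N}\sum_x\Delta_NG(x/N)\,\mathbf{g}_N(\tau_x\eta)$ by lattice translations so that it is dominated by $|\Delta_NG|_\infty$ times $\frac{1}{N}\big|\sum_{j}\sum_{z\in\mathcal{R}_{N,j}}\tau_z\mathbf{r}_N^j\big|_\infty$, whose finiteness is precisely what is guaranteed by assumption \eqref{ass:rate-total}. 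This is where the telescoping structure of $\mathbf{g}_N$ and the growth assumption on aggregated rates are essential.
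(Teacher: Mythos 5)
Your handling of the martingale term is correct and essentially identical to the paper's: both compute the carré du champ, bound $(G((x+1)/N)-G(x/N))^2$ by $|\partial_uG|_\infty^2/N^2$, and invoke \eqref{ass:rate} together with $\mathfrak{p}_N\to 0$. The decomposition of the drift via the gradient property, the pointwise control of the $\mathbf{h}_N$ piece by $\overline{h}$ through \eqref{ass:h-bound}, and the observation that the SSEP contribution is $O(\mathfrak{p}_N)$ are also all correct.

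The gap is in your treatment of the $\mathbf{g}_N$ piece. You correctly flag it as the obstacle, but the proposed fix — regroup by lattice translations to dominate $\frac{1}{N}\sum_x\Delta_NG(\tfrac{x}{N})\mathbf{g}_N(\tau_x\eta)$ pointwise by $|\Delta_NG|_\infty\cdot\frac{1}{N}\big|\sum_j\sum_{z\in\mathcal{R}_{N,j}}\tau_z\mathbf{r}_N^j\big|_\infty$ — does not hold. Unwinding the sum and changing variables $z = x+i$ gives terms of the form $\Delta_NG((z-i)/N)\,\tau_z\mathbf{c}_N^j(\eta)(\eta(z)-\eta(z+1))$ with $z$ ranging over \emph{all} of $\T_N$ and $i$ ranging over $O(|\underline{r}_{N,j}|)$ offsets; there is no telescoping that reduces this global sum to the purely local aggregate $\sum_{z\in\mathcal{R}_{N,j}}\tau_z\mathbf{r}_N^j$ appearing in \eqref{ass:rate-total}, and a crude bound produces $\sum_j(|\underline{r}_{N,j}|+1)w_{N,j}$, which is divergent for the interpolating model when $m\in(0,1)$. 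The paper instead invokes Proposition~\ref{prop:pedro} followed by Lemma~\ref{lem:rest}, an entropy--Dirichlet-form estimate: one passes to the reference measure $\nu_\alpha^N$, integrates by parts via \eqref{by-parts}, and splits into a Dirichlet term plus a term controlled by $\underline{r}_N$. This yields the bound $c\|G\|_\infty\sqrt{\gamma}\sqrt{\underline{r}_N/N}$ in \eqref{eq:tight-g}, which vanishes as $\gamma\to 0$ after using \eqref{ass:rate-total}. Note the $\sqrt{\gamma}$ rate, characteristic of the entropy/Feynman--Kac method, as opposed to the linear-in-$\theta$ rate a pointwise bound would give; this is itself a signal that a deterministic estimate is not available here. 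So while your identification of the difficulty is exactly right, the resolution requires the stochastic replacement-lemma machinery, not a deterministic regrouping.
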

    
    \begin{proof}
		The argument is given by an application of Aldous' conditions. We direct the reader to \cite{s2f,BDGN} and \cite[proof of Proposition 4.1]{GMO22} for a more detailed exposition, and focus here on the aspects related with our specific model. 
        
        From the standard machinery, it is enough to show that for all $ \eps>0 $ 
		\begin{align}
			\limsup_{\gamma\to0}\limsup_{N\to+\infty}
			\mathbb{P}_{\mu_N}
			\left(\eta_\cdot:\;
			\sup_{|t-s|\leq\gamma}
			\abs{
				\pi^N_t(\eta,G)-\pi^N_s(\eta,G)
			}>\eps
			\right)
            =0,
		\end{align}
		where $ G $ is in a dense subset of $ C([0,1]) $ with respect to the uniform topology. Recalling the martingale $ \text{M}_N^G $ in \eqref{eq:M}, the limit above can be guaranteed by the following two:
		\begin{align}\label{aldous0}
			&\lim_{\gamma\to0}\limsup_{N\to+\infty}
			\mathbb{P}_{\mu_N}
            \bigg(\eta_{\cdot}:\;
			\sup_{|t-s|\leq\gamma}
			\abs{
				\text{M}_N^G(t)-\text{M}_N^G(s)
			}
			>\frac{\eps}{2}
		\bigg)
            =0,
            \\
            &\lim_{\gamma\to0}\limsup_{N\to+\infty}
			\mathbb{P}_{\mu_N}
            \bigg(
            \eta_{\cdot}:\;
			\sup_{|t-s|\leq\gamma}
            \abs{
				\int_s^t
				\mathfrak{L}[\pi_{r}^N](\eta,G)\rmd {r}
			}
			>\frac{\eps}{2}
		\bigg)
        =0.
		\end{align}
		From the triangle, Jensen's and Doob's inequalities for the the first term above, and Proposition \ref{prop:pedro} for the second, it is enough show show that 
		\begin{align}\label{aldous}
\begin{split}
    			\limsup_{N\to+\infty}
			\mathbb{E}_{\mu_N}\left[
			\left(
			\text{M}_N^G(T)
			\right)^2
			\right]^\frac12 &=0
,\\%			\\\quad\text{and}\quad
			\lim_{\gamma\to0}\limsup_{N\to+\infty}
			\mathbb{E}_{\mu_N}\left[
			\abs{
			\mathbf{1}_{|t-s|\leq \gamma} 	\int_s^t
				\mathfrak{L}[\pi_{r}^N](\eta,G)\rmd {r}
			}
			\right]&=0.
\end{split}
		\end{align}
From \cite[A1 Lemma 5.1]{KL:book}, $\text{N}_N^G$ is a martingale, where
\begin{align}
\text{N}_N^G(t)=
\left(
\text{M}_N^G(t)
\right)^2
-\int_0^{t}\text{F}_N^G(s)\rmd s
% \quad\text{and}\quad
% \text{F}_N^G(s)
% &=
% \sum_{x\in\mathbb{T}_N}
% \mathbf{r}_{\ell_N}^{\beta}(\tau_x\eta_{N^2s})
% \left(G_s(\tfrac{x+1}{N})-G_s(\tfrac{x}{N})\right)^2
\end{align}
and
\begin{align*}
\text{F}_N^G(s)
&=
\sum_{x\in\mathbb{T}_N}
(\mathfrak{p}_N+\tau_x\mathbf{r}_{N}(\tau_x\eta_{N^2s}))
\left(G_s(\tfrac{x+1}{N})-G_s(\tfrac{x}{N})\right)^2
%\\&
%\\&\leq
%\frac{\norm{\beta}_{\infty}}{N}
%\frac1N\sum_{x\in\mathbb{T}_N}
%\left(\nabla_N^+G_s(\tfrac{x}{N})\right)^2
%\xrightarrow{N\to+\infty}0,
\end{align*}
By assumption \eqref{ass:rate}, $|\mathbf{c}_{N}|_{\infty}=o(N)$ and in particular
\begin{align}
	\text{F}_N^G(s)
	\leq
	\frac{|\mathbf{c}_{N}|_{\infty}}{N}
	\frac1N\sum_{x\in\mathbb{T}_N}
	\left(NG_s(\tfrac{x+1}{N})-NG_s(\tfrac{x}{N})\right)^2
	\xrightarrow{N\to+\infty}0,
\end{align}
guaranteeing the first equality in \eqref{aldous}.

For the second, the gradient property and the periodic boundary imply directly that
\begin{align}
    \mathfrak{L}[\pi_{r}^N](\eta,G)
    =\frac1N\sum_{x\in\T_N}
    \Delta_NG(\tfrac{x}{N})
    (
    \mathbf{H}_{N}(\tau_x\eta_{N^2r})
    +\mathfrak{p}_N\eta_{N^2r}(x)
    )
\end{align}
with $\mathbf{H}_{N}=\mathbf{h}_{N}+\mathbf{g}_{N}$ as in \eqref{ass:h-form}. By hypothesis, $|\mathbf{h}_{N}|_\infty$ is uniformly bounded from above by a constant independent of $N$, as in \eqref{ass:h-bound}, and in this way is enough to estimate the term associated with $\mathbf{g}_{N}$, since $\mathfrak{p}_N\xrightarrow{N\to\infty}0$. The sequence $(|\mathbf{g}_{N}|_{\infty})_{N\geq 1}$ can be divergent, therefore it needs to be analysed in a different way. From an application of Proposition \ref{prop:pedro} and then Lemma \ref{lem:rest}, that yields
    \begin{align}\label{eq:tight-g}
        \mathbb{E}_{\mu_N}
        \bigg[
        \bigg|
        \int_{0}^t
        \frac1N \sum_{x\in\T_N}\Delta_NG(\tfrac{x}{N})
        \mathbf{1}_{\{r>s,\;|t-s|\leq \gamma\}}
        \mathbf{g}_{N}(\tau_x\eta_{N^2r})
        \rmd r
        \bigg|
        \bigg]
        \leq 
        c\norm{G}_{\infty}\sqrt{\gamma}
        \sqrt{\frac{\underline{r}_N }{N}}
    \end{align}
with $c>0$ a positive constant independent of $G,s,t,\gamma$ and $N$, and $\underline{r}_N$ as in \eqref{total-rate}. Recalling \eqref{ass:rate-total}, in order to conclude the proof we perform the limits $\lim_{\gamma\rightarrow 0}\limsup_{N\rightarrow+\infty}$. 

\end{proof}

\subsection{Characterization of the limit points}\label{subsec:char}
The main result of this subsection is the following
\begin{Prop}\label{prop:char}
	For any limit point $ \mathbb{Q} $ of a convergent subsequence of $(\mathbb{Q}_N)_{N\in\mathbb{N}} $ it holds 
\begin{align}
        \mathbb{Q}
	\big(\pi_{\cdot} 
        :\;
        \mathfrak{F}_t(\rho^{\rm ini},\rho,G) =0 
        ,\;
            \forall t\in[0,T],\;
        \forall G\in C^{1,2}([0,T]\times\mathbb T)
        \;\big|\;
        \tfrac{\pi(\rmd u)}{\rmd u}=\rho
        \big)
	=1
    ,
\end{align}
where  $\mathfrak{F}_t$ is given in  \eqref{weak}.
\end{Prop}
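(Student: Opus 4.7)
The plan is to pass to the limit in Dynkin's martingale identity \eqref{eq:M}, showing that the compensator converges in probability to the right-hand side of the weak formulation \eqref{weak}. Since the quadratic variation of $\text{M}_N^G$ vanishes (Subsection \ref{subsec:tight}), Doob's inequality reduces matters to identifying the limit of
\begin{align}
\int_0^t\Bigl(\pi_s^N(\eta,\partial_s G_s)+\mathfrak{L}_N[\pi_s^N](\eta,G_s)\Bigr)\rmd s.
\end{align}
The first summand converges by continuity of $\pi_\cdot\mapsto\int_0^t\inner{\rho_s,\partial_s G_s}\rmd s$ under the Skorokhod topology on $\mathcal{D}([0,T],\mathcal{M}_+)$ together with the absolute continuity of the limit measure (guaranteed by the exclusion constraint). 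The analysis of the second summand is the nontrivial point.

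Using the gradient property from Assumption \ref{ass}, two discrete summations by parts rewrite $N^2\mathcal{L}_N[\pi^N](\eta,G_s)$ as $\frac{1}{N}\sum_{x\in\T_N}\Delta_N G_s(\tfrac{x}{N})\,\mathbf{H}_N(\tau_x\eta_{N^2s})$. I split $\mathbf{H}_N=\mathbf{h}_N+\mathbf{g}_N$ as in \eqref{ass:h-form}: the $\mathbf{g}_N$ piece is neutralised in expectation by the same probabilistic estimate \eqref{eq:tight-g} already used in the tightness proof, which leverages \eqref{ass:rate-total} to absorb the potential blow-up of $|\mathbf{g}_N|_\infty$. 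The SSEP perturbation contributes $\mathfrak{p}_N\,\frac{1}{N}\sum_x\Delta_N G_s(\tfrac{x}{N})\eta(x)$, bounded by $\mathfrak{p}_N\|\Delta_N G\|_\infty\to 0$, hence also negligible.

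It remains to show that $\frac{1}{N}\sum_x\Delta_N G_s(\tfrac{x}{N})\mathbf{h}_N(\tau_x\eta_{N^2s})$ approximates $\inner{\Phi(\rho_s),\partial_u^2 G_s}$. The strategy is a truncation-plus-replacement scheme. I fix an auxiliary integer $L$, decompose $\mathbf{h}_N=\mathbf{h}_L+(\mathbf{h}_N-\mathbf{h}_L)$, and use the uniform Cauchy condition \eqref{ass:h-reg} so that the remainder has negligible contribution once $L$ is chosen large after sending $N\to\infty$. For the fixed-$L$ piece, the one-block and two-block replacement lemmas of Section \ref{sec:rep-lem} allow me to replace $\mathbf{h}_L(\tau_x\eta)$ by $\Phi_L$ evaluated at the empirical density averaged over a mesoscopic box of size $\epsilon N$ around $x$; the combination of $\mathbf{h}_L$ being bounded by \eqref{ass:h-bound} and $\Phi_L$ being defined by \eqref{phiL} ensures this substitution is quantitative. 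Under Regime \eqref{I}, the existence of mobile clusters combined with the subdiffusive SSEP perturbation ($\mathfrak{p}_N N^2\to\infty$) supplies the local mixing required; under Regime \eqref{II}, the irreducibility bound $\mathbf{c}_N\geq\mathfrak{m}$ plays that role directly without need for the perturbation.

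Along the convergent subsequence defining $\mathbb{Q}$, the continuous mapping theorem then yields convergence of the remaining term to $\int_0^t\inner{\Phi_L(\rho_s^\epsilon),\partial_u^2 G_s}\rmd s$; Lebesgue's differentiation theorem as $\epsilon\to 0$ followed by the uniform convergence $\Phi_L\to\Phi$ as $L\to\infty$ recovers the identity $\mathfrak{F}_t(\rho^{\rm ini},\rho,G)=0$ $\mathbb{Q}$-almost surely for each fixed $t$ and $G$, which a standard countable dense argument promotes to simultaneous validity for all $t\in[0,T]$ and $G\in C^{1,2}([0,T]\times\mathbb{T})$. The principal obstacle is orchestrating the nested limits $N\to\infty$, $\epsilon\to 0$, $L\to\infty$: the Cauchy condition \eqref{ass:h-reg} is precisely what allows the $N$-dependent rates to be reduced to a fixed-range function before the replacement lemmas apply, and \eqref{eq:tight-g} is what permits us to tolerate the potentially unbounded term $\mathbf{g}_N$ throughout the argument.
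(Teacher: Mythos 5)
Your overall scheme matches the paper's: start from Dynkin's martingale, use the gradient decomposition $\mathbf{H}_N=\mathbf{h}_N+\mathbf{g}_N$, neutralise $\mathbf{g}_N$ via \eqref{eq:tight-g}, truncate $\mathbf{h}_N$ to $\mathbf{h}_L$ using the Cauchy condition \eqref{ass:h-reg}, apply the one- and two-block replacement lemmas, and orchestrate the nested limits $N\to\infty$, then $\eps\to 0$, then $L\to\infty$. That is indeed the paper's route, and the treatment of the SSEP perturbation and the countable-dense promotion to simultaneous $t,G$ are both standard and fine.

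There is, however, a genuine gap in the single step where you invoke ``the continuous mapping theorem then yields convergence of the remaining term to $\int_0^t\inner{\Phi_L(\rho_s^\epsilon),\partial_u^2 G_s}\rmd s$.'' The functional that appears before the $N\to\infty$ limit is (up to lower-order terms) built from $\Phi_L$ composed with the pairing $\pi\mapsto\pi(\iota_\eps^u)$, where $\iota_\eps^u$ is a normalised indicator of the interval $[u,u+\eps)$; this pairing is \emph{not} continuous with respect to the weak topology on $\mathcal{M}_+$, hence not continuous on $\mathcal{D}([0,T],\mathcal{M}_+)$ either, and the continuous mapping theorem (or Portmanteau) cannot be applied directly. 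The paper explicitly flags this obstruction and handles it by inserting a mollifier $\tilde\iota_\eps$, replacing $\iota_\eps^u$ with $\tilde\iota_\eps\star\iota_\eps^u$, so that the resulting map $\Psi_{(\cdot)}^{\eps,L,\star}$ \emph{is} continuous, and then proving the two quantitative claims \eqref{eq:psi-star} and \eqref{eq:psi-eps} to control the error introduced by the mollification and to connect $\Psi^{\eps,L}$ to $\Phi_L(\rho)$ through Lebesgue differentiation. Without this smoothing step your passage from $\mathbb{Q}_N$ to $\mathbb{Q}$ does not close; it is the same issue you glossed over for the linear term $\pi_s^N(\partial_sG_s)$ (benign there, because a smooth test function is already involved, but fatal for the nonlinear $\Phi_L$ composed with a sharp local average). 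Once you insert the mollifier and the associated error estimates, your argument aligns with the paper's proof.
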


\begin{proof}
For the rest of this proof we are going to assume the absolute continuity $\pi(\rmd u)=\rho(u)\rmd u$, even though we do not write its conditioning explicitly. We are going to show that for any $ \delta>0 $, 
\begin{align}\label{q:prob_equiv}
		\sup_{t\in[0,T]}
		\bigg|
	\Big\langle
        G_t
        ,\rho_t
        \Big\rangle
        -\Big\langle
    	G_0
            ,\rho^{\rm ini}
        \Big\rangle
	-\int_0^t
            \Big\langle
    	    \partial_sG
                ,\rho
            \Big\rangle
        \rmd s
	-
        \int_0^t
        \Big\langle
		\partial_{u}^2 G
            ,
            \Phi(\rho)
	\Big\rangle
        \rmd s
    \bigg|
		\leq \delta
  &&
  \mathbb{Q}-\text{a.s.},
\end{align}
from which Proposition \ref{prop:char} then follows. Recalling \eqref{phiL}, let us fix $L$ large enough, that shall be taken to $+\infty$ in the end, so that 
\begin{align}
	\norm{\partial_{u}^2 G}_{\infty}
%	\int_0^t
%	\int_\T 
%	\big|
%	\Phi(\rho_s(u))
%	-\Phi_L(\rho_s(u))
%	\big|
%	\rmd u
%	\rmd s
%	=
	\int_0^t
	\int_\T 
	\big|
	\Phi(\rho_s(u))
	-
	\text{E}_{\nu_{\rho_s(u)}^{L^\star}}[\mathbf{h}_{L}]
	\big|
	\rmd u
	\rmd s
	<\delta/2,
\end{align}
and, writing \ndef{$\mathfrak{F}^L_t$} for the quantity inside the absolute value in \eqref{q:prob_equiv} with $\Phi$ replaced by $\Phi_L(\rho)$ for some fixed $L$, we will argue that 
\begin{align}\label{char:eq1}
    \mathbb{Q}
    (
    \pi_\cdot:\:
    \sup_{t\in[0,T]}
    \big|
    \mathfrak{F}_t^L(\rho^{\rm ini},\rho,G)
    \big|
    >\delta/2
    )
    =0
\end{align}
by applying Portmanteau's theorem, and then proceeding with arguments at the discrete level. Because the map $\pi_\cdot\mapsto \sup_{t\in[0,T]}
    \big|
    \mathfrak{F}_t^L(\rho^{\rm ini},\rho,G)
    \big|$ is not continuous with respect to the Skorokhod topology, this cannot be performed directly (see, for example, \cite{s2f,BMNS} for more details). It is standard to proceed with a smoothing argument as we now explain. Let us consider $\tilde{\Phi}_L$ to be the extension of $\mathbf{h}_{L}\equiv\mathbf{h}_{L}(\eta(0),\eta(1),\dots,\eta(N-1))$ given by $\ndef{\tilde{\Phi}_L}[f]:=\mathbf{h}_L(f(0),f(1),\dots,f(N-1))$, for any $f:\T_N\to\mathbb{R}$, where we extended the domain of $\mathbf{h}_{L}$ from $\{0,1\}^{\T_L^\star}$ to $\mathbb{R}^{\T_{L^\star}}$, that is, $v\in\mathbb{R}^{\T_{L^\star}}\mapsto\mathbf{h}_L(v_0,v_1,\dots,v_{L^\star-1})$, and therefore perform the abuse of notation $f\equiv (f(x))_{x\in\T_L^\star}$ in writing $\tilde{\Phi}_L[f]$. Concretely, $\tilde{\Phi}_L[f]$ corresponds to replacing $\eta(x)$ in the expression of $\mathbf{h}_{L}(\eta)$ by $f(x)$, for each $x\in\T_L^\star$. In particular, $\tilde{\Phi}_L[\mathbf{1}_\alpha]=\Phi_L(\alpha)$ with the constant map $x\in\T_N\to\mathbf{1}_\alpha(x)=\alpha\in[0,1]$. The main observation is that $\tilde{\Phi}_L$ is a "linearization" of $\Phi_L$ (the quantity $\tilde{\Phi}[\rho_s((\cdot)/L^\star)]$ is linear on any element of $\{\rho(x/L^\star)\}_{x\in\T_L^\star}$) provided by the dynamics.

Fixed $ \eps>0 $ and $ u\in\T $, define the cut-off function \ndef{$\iota_\eps^{u}$} through
\begin{align}
    {\iota}_\eps^{u}(v)
    :=
    \frac{1}{\eps}\mathbf{1}_{B_{\eps}(u)}(v), 
%    \where B_\eps(u)=[u,u+\eps),
\end{align}
for each $v\in\T$, where $\ndef{B_\eps}(u)=[u,u+\eps)$, and where $\ndef{\mathbf{1}_A}(v)=1$ if $v\in A$ and zero otherwise. Let \ndef{$\Tilde{\iota}_\eps$} be a mollifier and such that for each $s\in[0,T]$ and for a.e. $u\in\T^d$ it holds that
\begin{align}\label{eq:moll}
    |\pi_s(\Tilde{\iota}_\eps\star\iota_\eps^u)
    -\pi_s(\iota_\eps^u)
    |<\eps
    .
\end{align}
Shortening $\ndef{\Psi_u^{\eps,L}}[\pi_s]\equiv\Tilde{\Phi}_L[\pi_s(\iota_\eps^{u+(\cdot)\eps})]$ and $\ndef{\Psi_u^{\eps,L,\star}}[\pi_s]\equiv\Tilde{\Phi}_L[\pi_s(\Tilde{\iota}_\eps\star\iota_\eps^{u+(\cdot)\eps})]$, we claim that
\begin{align}
    \big|
    \Psi_u^{\eps,L}[\pi_s]
    -\Psi_u^{\eps,L,\star}[\pi_s]
    \big|
    &\leq 
    c_{L}
    \eps,
   \label{eq:psi-star}\\
    \big|\Phi_L(\rho_s(u))
    -\Psi_u^{\eps,L}[\pi_s]\big|
    &\leq
    c_{L}'
    \sum_{m=1}^{L+1}|\rho_s(u)-\pi_s(\iota_{m\eps}^{u})|
    ,
    \label{eq:psi-eps}
\end{align}
for $c_{L},c_{L}'$ constants dependent on $L$. From Lebesgue's differentiation theorem, $\lim_{\eps\to0}|\rho_s(u)-\pi_s(\iota_{m\eps}^u)|=0$ for almost every $u\in\T$ and every $s\in [0,T]$ and $1\leq m\leq L+1$.

To prove the claim, we see that for any sequence of real numbers $(a_i)_i,(b_i)_i$, one can rearrange
\begin{align}\label{reorg}
    \prod_{i=0}^La_i
    -\prod_{i=0}^Lb_i
    =
    \sum_{m=0}^{L}(a_m-b_m)\prod_{\substack{i=0\\i\neq m}}^{L}c_i
    \where
    c_i=\begin{cases}
        a_i, &i< m,\\b_i, &i>m.
    \end{cases}
\end{align} 
In this way, \eqref{eq:psi-star} follows the above, from bounding $\pi_s(\iota_\eps^{u+p\eps})\leq 1$ and $\pi_s(\Tilde{\iota}_\eps\star\iota_\eps^{u+p\eps})\leq 1$ for each $p\in\T_N$, and \eqref{eq:moll}. In order to see \eqref{eq:psi-eps}, we also use \eqref{reorg}, bound from above $\rho(u),\pi_s(\iota_\eps^{u+p\eps})\leq 1$, and finally, estimate
\begin{align}
\begin{split}
    \big|
    \rho_s(u)-\frac{1}{\eps}\int_{u}^{u+m\eps}\rho(v)\rmd v
    \big|
    &\leq 
    (m+1)
    \bigg|
	\rho_s(u)-\frac{1}{(m+1)\eps}\int_{u}^{u+(m+1)\eps}\rho_s(v)\rmd v
	\bigg|
\\&+
   m\bigg|
				\rho_s(u)
				-\frac{1}{m\eps}\int_{u}^{u+m\eps}
				\rho_s(v)\rmd v
    \bigg|
    .
    \end{split}
\end{align}

In this way, from the monotonicity of the limit, the probability in \eqref{char:eq1} is bounded from above by
\begin{align}
%         \mathbb{Q}
%     (
%     \pi_\cdot:\:
%     \sup_{t\in[0,T]}
%     \big|
%     \mathfrak{F}_t^L(\rho^{\rm ini},\rho,G)
%     \big|
%     >\delta/2
%     )
% \\
%     \leq 
    \limsup_{L\to\infty}
    \lim_{\eps\to0}
\mathbb{Q}
\bigg(\pi_\cdot:\;
    \sup_{t\in[0,T]}
    \bigg|
    \pi_t(G_t)-\pi_0(G_0)
    -\int_0^t
    \pi_s(\partial_sG)
    \rmd s
-
    \int_0^t
    \Big\langle
    \partial_{u}^2 G(\cdot)
    ,
    \Psi_{(\cdot)}^{\eps,L,\star}[\pi_s]
    \Big\rangle
        \rmd s
    \bigg|>\tfrac{\delta}{2^3}
\bigg),
\end{align}
where we used that because $ \mu_N $ is a local equilibrium measure associated with the profile $ \rho^{\rm ini}$, then $\pi_0(G_0)=\inner{G_0
            ,\rho^{\rm ini}}$ with probability one. 

From Portmanteau's theorem, then again \eqref{eq:psi-star} while assuming that $\eps>0$ is small enough, the probability in the previous display is no larger than
\begin{multline}\label{char:eq2}
%     \mathbb{Q}\Bigg(
%     % \lim_{L\to\infty}		
%     % \lim_{\eps\to 0}
%     \sup_{t\in[0,T]}
%         \bigg|
% 	\Big\langle
%         G_t
%         ,\rho_t
%         \Big\rangle
%         -\Big\langle
%     	G_0
%             ,\rho_0
%         \Big\rangle
% 	-\int_0^t
%             \Big\langle
%     	    \partial_sG
%                 ,\rho
%             \Big\rangle
%         \rmd s
% 	-\int_0^t
%             \Big\langle
%         	\partial_u^2 G_s
%                 ,
%                 \Phi_{L,\eps}(\rho^\star)
%             \Big\rangle \rmd s\bigg|>\frac{\delta}{2^3}
% 		\Bigg)
% \\
% \leq 
\limsup_{N\to+\infty}
\mathbb{P}_{\mu_N}
\bigg(
\eta_\cdot:\;
\sup_{t\in[0,T]}
\bigg|
    \pi_t^N(\eta,G_t)
    -\pi_0^N(\eta,G_0)
    -\int_0^t
        \pi_s^N(\eta,\partial_sG_s)
    \rmd s
\qquad\qquad\qquad\\
-\int_0^t
    \Big\langle
    \partial_u^2 G_s(\cdot)
    ,
    \Psi_{(\cdot)}^{\eps,L}[\pi_s^N(\eta)]
    \Big\rangle%(\eta,\iota_\eps^{u+(\cdot)\eps})]
\rmd s
\bigg|>\frac{\delta}{2^4}
\bigg)
\end{multline}
We stress that $\Psi_{(\cdot)}^{\eps,L}[\pi_s^N(\eta)]$ corresponds to the expression of $\mathbf{h}_{L}(\eta)$ with $\eta(z)$ replaced by $\pi_s^N(\eta,\iota_\eps^{(\cdot)+z\eps})$ for each $z\in\T_N$.

Recalling the expression for the martingale $\text{M}_N^G$ in \eqref{eq:M}, from the gradient property as in Lemma \ref{lem:grad}, the probability in \eqref{char:eq2} can be bounded from above by 
\begin{align}%\label{on_P}
&\mathbb{P}_{\mu_N}
\bigg(
\sup_{t\in[0,T]}
\bigg|
\text{M}_N^G(t)
\bigg|
>\frac{\delta}{2^5}
\bigg)
\\\label{eq:pN}+&
\mathbb{P}_{\mu_N}
\bigg(
\sup_{t\in[0,T]}
\bigg|
    \int_0^t
        \frac{1}{N}
        \sum_{x\in\T_N}
		\Delta_N G_s(\tfrac{x}{N})
%         \big\{
         \mathfrak{p}_N\eta_{N^2 s}(x)
%         +\delta_N\tau_x\mathbf{v}^{n}(\eta_{N^2 s})
%         \big\}
    \rmd s
\bigg|>\frac{\delta}{2^6}
\bigg)\\+&
\mathbb{P}_{\mu_N}
\bigg(
\sup_{t\in[0,T]}
\bigg|
    \int_0^t
        \frac{1}{N}
        \sum_{x\in\T_N}
		\Delta_N G_s(\tfrac{x}{N})
            \mathbf{g}_{\ell_N}
            (\tau_x\eta_{N^2 s})
    \rmd s
\bigg|>\frac{\delta}{2^7}
\bigg)
\\%\label{h-to-reg}
+&
\mathbb{P}_{\mu_N}
\bigg(
\sup_{t\in[0,T]}
\bigg|
\int_0^t
    \frac1N 
    \sum_{x\in\T_N}
    \Delta_NG(s,\tfrac{x}{N})
\bigg\{
\mathbf{h}_{\ell_N}(\tau_x\eta_{N^2s})
-
\mathbf{h}_{L}(\tau_x\eta_{N^2s})
\bigg\}
\rmd s
\bigg|
>\frac{\delta}{2^8}
\bigg)
\\\label{h-to-rep}
+&
\mathbb{P}_{\mu_N}
\bigg(
\sup_{t\in[0,T]}
\bigg|
    \int_0^t
        \frac{1}{N} 
        \sum_{x\in\T_N}
            \Delta_N G_s(\tfrac{x}{N})
            \bigg\{
            \Psi_{\tfrac{x}{N}}^{\eps,L}[\pi_s^N(\eta)]
                -
                \mathbf{h}_L(\tau_x\eta_{N^2s})
            \bigg\}
    \rmd s
\bigg|
>\frac{\delta}{2^{9}}
\bigg)
\\
+&
\mathbb{P}_{\mu_N}
\bigg(
\sup_{t\in[0,T]}
\bigg|
\int_0^t
    \partial_u^2 G_s(u)
    \Psi_{u}^{\eps,L}[\pi_s^N(\eta)]
    \rmd u
\rmd s%%
\\&
\qquad\qquad\qquad
-\int_0^t
    \frac1N
    \sum_{x\in\T_N}
    \Delta_NG_s(\tfrac{x}{N})
                \Psi_{\tfrac{x}{N}}^{\eps,L}[\pi_s^N(\eta)]% \mathcal{V}_{L,\eps}[\eta_{N^2s}](\tfrac{x}{N})
\rmd s
\bigg|
>\frac{\delta}{2^{10}}
\bigg)
.
\end{align}

We analyse each of the terms above:
\begin{itemize}
    \item From Doob's inequality and \eqref{aldous}, the first term vanishes in the limit $N\to+\infty$; 

    \item The second vanishes since $\mathfrak{p}_N\xrightarrow{N\to\infty}0$;
    
    \item For the third, one applies Proposition \ref{prop:pedro} and the forthcoming Lemma \ref{lem:rest}. We also recall the computations just after \eqref{eq:tight-g},

	\item For the fourth, we recall Assumption \ref{ass}

	\item The last term is analysed by performing a Taylor expansion, then approximating the integral by a Riemann sum. 
\end{itemize}
The fifth term is more involved, and we analyse separately.

From here on, for each $l\in\mathbb{N}_+$ consider the ball $\ndef{B_{l}}=\llbracket 0,l-1\rrbracket$, and shorten for any $\eta\in\Omega_N$ the density $\ndef{\inner{\eta}_l}$ as   
\begin{align*}
	\inner{\eta}_{l}
    :=\frac{1}{l} \sum_{y\in B_{l}}\eta(y).
\end{align*}

We now focus on \eqref{h-to-rep}. 
%recall the rearrangement in \eqref{reorg}, $\mathbf{h}_{\ell_N}$ as in \eqref{h} and also \eqref{h:alt}. 
For any set $A\subset\T_N$ and $P\subset A$, once can express
\begin{align}
    &\prod_{p\in P}
    \pi_s^N(\eta,\iota_{\eps}^{p\eps})
    \prod_{q\in A\backslash P}
    (1-\pi_s^N(\eta,\iota_{\eps}^{q\eps}))
    -
    \prod_{p\in P}\eta(p)
    \prod_{q\in A\backslash P}(1-\eta(q))
    \\&=
    \sum_{m\in A}
    \varphi_m^{\eps,\eps N}(\eta)
\bigg\{
    \pi_s^N(\eta,\iota_{\eps}^{m\eps})
-
    \inner{\tau_{\ceil{m\eps N}}\eta}_{\ceil{\eps N}}
\bigg\}
\label{embbeb}\\
%%%%%%%%%%%%%%%%%%%
% &+
%\prod_{p\in P}
%\inner{\tau_{\ceil{p\eps N}}\eta}_{\ceil{\eps N}}
%    \prod_{q\in A\backslash P}
%\inner{\tau_{\ceil{q\eps N}}\overline{\eta}}_{\ceil{\eps N}}
%    % (
%    % 1-\inner{\tau_{\ceil{q\eps N}}\eta}_{\ceil{\eps N}}
%    % )
%-
%    \prod_{p\in P}
%    \inner{\tau_{pL}\eta}_{{L}}
%    \prod_{q\in A\backslash P}
%\inner{\tau_{qL}\overline{\eta}}_{{L}}
%    % (1-
%    % \inner{\tau_{qL}\eta}_{{L}}
%    % )
%\label{two-block}\\
%%%%%%%%%%%%%%%%%%%
 &+    \sum_{m\in A}
  \varphi_m^{\eps N,L}(\eta)
 \bigg\{
     \inner{\tau_{\ceil{m\eps N}}\eta}_{\ceil{\eps N}}
 - 
     \inner{\tau_{mL}\eta}_{{L}}
 \bigg\}
 \label{two-block}\\
&
+    \sum_{m\in A}
 \varphi_m^{L}(\eta)
\bigg\{
\inner{\tau_{mL}\eta}_{{L}}
 -\eta(mL)
\bigg\} 
\label{one-block}\\&
 +
     \sum_{m\in A}
    \varphi_m^{L}(\eta)
\bigg\{
\eta(mL)
 -
 \eta(m)
 \bigg\}\label{mix},
\end{align}
where, for each $m\in A$, 
\begin{itemize}
    \item $\varphi_m^{\eps N,L}$ is independent of the occupation value at $\llbracket\ceil{m\eps N},\ceil{m\eps N}+\ceil{\eps N}\rrbracket\cup\llbracket mL,mL+L\rrbracket$;

    \item $\varphi_m^{L}$ is independent of the occupation value at $\llbracket mL,mL+L\rrbracket\cup\llbracket mL,mL+L\rrbracket$;

    \item $\varphi_m^{L}$ is independent of the occupation value at $\{mL,m\}$;

    \item Each of the previous maps and $\varphi_m^{\eps,\eps N}$ are uniformly bounded from above by $1$.

\end{itemize}
The particular expressions for the "$\varphi$" functions are not important, only their boundedness and independence of the occupation-values at specific sites. In order to prove that the $\lim_{L\to\infty}\lim_{\eps\to0}\limsup_{N\to\infty}$ of the probability in \eqref{h-to-rep} is zero, we split the aforementioned probability into four probabilities, each associated with a translation by $x$ of each term of the decomposition in the previous display. For \eqref{embbeb}, from Markov's inequality it is enough to see that for each $0\leq m\leq L$,
\begin{align}
\Big|
\pi_s^N(\eta,\iota_{\eps}^{m\eps})
-
\inner{\tau_{\ceil{m\eps N}}\eta}_{\ceil{\eps N}}
\Big|
=
    % \frac{1}{\eps N}
    % \sum_{y\in\T_N\cap B_{N\eps}(uN+p\eps N)}\eta(y)
    % -\sum_{x\in\llbracket 0,\;\ceil{\eps N}\rrbracket}\eta(\ceil{m\eps N}+x)
%\lim_{\eps\to 0}\limsup_{N\to\infty}
\Big|    \frac{1}{\eps N}
    \sum_{y\in B_{\eps N}(m\eps N)\cap \T_N}\eta(y)
    -\frac{1}{\ceil{\eps N}}
    \sum_{y\in B_{\ceil{\eps N}}(\ceil{m\eps N})}\eta(y)
\Big|
\end{align}
and that the quantity above vanishes in the limits $\lim_{\eps\to 0}\limsup_{N\to\infty}$.

Next, we apply Proposition \ref{prop:pedro} to each of the remaining probabilities and analyse them separately with the Replacement Lemmas proved in Section \ref{sec:rep-lem}. The terms associated with \eqref{mix} and \eqref{one-block} are analysed with the upcoming Lemma \ref{lem:1block}, with $L$ in the statement of the aforementioned lemma either equal to $1$ or an arbitrary integer $>1$, respectively, and $w$ and $y$ chosen arrodingly. 

In order to conclude the proof, to treat the term associated with \eqref{two-block}, it is now enough to apply the triangle inequality and then invoke Lemma \ref{lem:2block}, fixing there $\ell\equiv\eps N$ and $w,y$ accordingly, noting that $\rmd(w,y)<L+\eps N$. 

\begin{Rem}[Regime II]\label{rem:fde-char}
	In this case, the diffusion is sufficiently fast that the arguments for the replacement lemmas become much simpler. Specifically, as seen in \cite{s2f} and in the proof of Lemma \ref{lem:1block}, one can follow the proof exactly as presented, obtaining the upper bound \eqref{bound:lem1block} with $\mathfrak{p}_N$ replaced by $\mathfrak{m}$, since in this regime the minimal rate is given by $\mathfrak{m}$ rather than $\mathfrak{p}_N$. This allows us to set $L = \varepsilon N$ directly in \eqref{bound:lem1block}. In particular, instead of the terms \eqref{embbeb}--\eqref{mix}, we have \eqref{embbeb} and \eqref{two-block} with $L \equiv 1$ in \eqref{two-block}.
	
	For the remainder of the characterisation of the limit points, the only further simplification is the absence of the term in \eqref{eq:pN}.
	
\end{Rem}
\end{proof}

\section{Replacement Lemmas}\label{sec:rep-lem}
The goal of this section is to prove the Lemmas \ref{lem:rest},\ref{lem:1block} and \ref{lem:2block}, in the context of Regime I. As mentioned in Remark \ref{rem:fde-char}, there is only the need to show the analogue of Lemma \ref{lem:1block} in the Regime II, with the proof being identical except for the term $\mathbf{p}_N$ being replaced by $\mathfrak{m}$.

In order to prove our results, we introduce the next objects.
\begin{Def}
Let $g:\Omega_N\to\mathbb{R}$ and $\nu$ be a probability measure on $\Omega_N$, and  introduce
    \begin{align}
        \frac12
        \Gamma_Ng
        :=
        g(-\mathfrak{L}_N)g
        +\frac12
        \mathfrak{L}_Ng,
    \quad\text{and}\quad
    \mathfrak{D}_N(g|\nu)
        :=
        \int_{\Omega_N}
        \Gamma_Ng
        \;\rmd
        \nu
        .
    \end{align}
\end{Def}
It will be convenient to make a couple of remarks. Fixed a constant profile $\alpha\in(0,1)$, the measure $\nu_\alpha^N$ will be our reference measure. We say that $f:\Omega_N\to\mathbb{R}^+\cup\{0\}$ is a \textit{density} (with respect to $\nu_\alpha^N$) if $\nu_\alpha^N(f)=1$. Because $\mathfrak{L}_N$ is reversible w.r.t. $\nu_\alpha^N$, for any density $f$ it holds that 
        \begin{align}\label{revers-dir}
        \frac12
        \mathfrak{D}_N(f|\nu_\alpha^N)
        =
        \int_{\Omega_N}
        f
        (-\mathfrak{L}_N)f
        \;\rmd\nu_\alpha^N
        .
        \end{align}
Moreover, fixed $x,y\in\T_N$, for any $\varphi:\Omega_N\to\mathbb{R}$ independent of the transformation $\eta\mapsto\theta_{x,y}\eta$ and any $g:\Omega_N\to\mathbb{R}$, it holds that
        \begin{align}\label{by-parts}
            \int_{\Omega_N}\varphi(\eta)
            (\eta(x)-\eta(y))
            g(\eta)\rmd\nu_\alpha^N
            =-\frac12\int_{\Omega_N}\varphi(\eta)
            (\eta(x)-\eta(y))
            \nabla_{x,y}
            g(\eta)\rmd\nu_\alpha^N
            .
        \end{align}

There exists a standard procedure that will be a common starting step for the proof of the forthcoming lemmas. For future reference, we summarize it in the next proposition, and overview its proof for completeness. %This is the content of the next proposition.
\begin{Prop}\label{prop:base}
Fixed $\alpha\in(0,1)$, for any  $\mathcal{V}:\Omega_N\times [0,T]\to\mathbb{R}$ bounded there is a positive constant \ndef{$c_{\alpha}$}, independent of $N$ and $\mathcal{V}$, such that, for any $A>0$, 
\begin{align}
        \mathbb{E}_{\mu_N}\Big[
        \Big|
        \int_0^t
        \mathcal{V}_s(\eta_{N^2s})
        \rmd s
        \Big|
        \Big]
        \leq 
        \frac{c_{\alpha}}{A}
        +
        \int_0^t
        \sup_{f \text{ density}}
        \bigg\{
        \big|
        \mathcal{V}_s(\eta)
        f(\eta)\; \rmd\nu_\alpha^N(\eta)\big|
        -\frac{\mathfrak{D}_{N}(\sqrt{f}|\nu_\alpha^N)}{2N A}
        \bigg\}
        \rmd s.
    \end{align}
\end{Prop}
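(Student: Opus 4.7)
The plan is to follow the standard entropy--Feynman--Kac scheme, exploiting that $\nu_{\alpha}^N$ is reversible for $\mathfrak{L}_N$ (since Bernoulli product measures are reversible both for $\mathcal{L}_N$, by the symmetry assumption $\mathbf{c}_{x,x+1}=\mathbf{c}_{x+1,x}$, and for $\mathcal{L}_N^{\rm SSEP}$). Let $Y=\int_0^t\mathcal{V}_s(\eta_{N^2s})\,\rmd s$. Starting from the entropy inequality, for any $\gamma>0$,
\begin{align}
    \mathbb{E}_{\mu_N}[|Y|]
    \leq
    \frac{1}{\gamma}\Big\{H(\mu_N|\nu_{\alpha}^N)
    +\log\mathbb{E}_{\nu_{\alpha}^N}\big[e^{\gamma|Y|}\big]\Big\}.
\end{align}
Since $\alpha\in(0,1)$, the reference measure $\nu_{\alpha}^N$ gives mass at least $(\min(\alpha,1-\alpha))^N$ to every configuration, so $H(\mu_N|\nu_{\alpha}^N)\leq K_{\alpha}N$ for a constant $K_{\alpha}$ independent of $\mu_N$ and $N$. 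I will choose $\gamma=NA$ at the end, which makes this contribution $\leq K_{\alpha}/A$.

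Next I would symmetrise the absolute value via $e^{\gamma|Y|}\leq e^{\gamma Y}+e^{-\gamma Y}$, obtaining
\begin{align}
    \log\mathbb{E}_{\nu_{\alpha}^N}\big[e^{\gamma|Y|}\big]
    \leq
    \log 2+\max_{\sigma\in\{\pm 1\}}
    \log\mathbb{E}_{\nu_{\alpha}^N}\big[e^{\sigma\gamma Y}\big].
\end{align}
For each sign $\sigma$, the Feynman--Kac inequality, combined with the variational characterisation of the top eigenvalue of the self-adjoint operator $\mathfrak{L}_N+\sigma\gamma\mathcal{V}_s$ on $L^2(\nu_{\alpha}^N)$ (using reversibility), yields
\begin{align}
    \log\mathbb{E}_{\nu_{\alpha}^N}\big[e^{\sigma\gamma Y}\big]
    \leq
    \int_0^t \sup_{h:\,\|h\|_{L^2(\nu_{\alpha}^N)}=1}
    \Big\{\sigma\gamma \int\mathcal{V}_s\,h^2\,\rmd\nu_{\alpha}^N
    -\langle h,(-\mathfrak{L}_N)h\rangle_{\nu_{\alpha}^N}\Big\}\,\rmd s.
\end{align}
Writing the normalised $h$ as $\sqrt{f}$ for a density $f$ and invoking the identity \eqref{revers-dir}, which gives $\langle\sqrt{f},(-\mathfrak{L}_N)\sqrt{f}\rangle_{\nu_{\alpha}^N}=\tfrac12\mathfrak{D}_N(\sqrt{f}|\nu_{\alpha}^N)$, this rewrites as
\begin{align}
    \frac{1}{\gamma}\log\mathbb{E}_{\nu_{\alpha}^N}\big[e^{\sigma\gamma Y}\big]
    \leq
    \int_0^t \sup_{f\text{ density}}
    \bigg\{\sigma\int\mathcal{V}_s f\,\rmd\nu_{\alpha}^N
    -\frac{\mathfrak{D}_N(\sqrt{f}|\nu_{\alpha}^N)}{2\gamma}\bigg\}\,\rmd s.
\end{align}

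To finish, I would take the maximum over $\sigma$ inside the time integral and observe that $\max_{\sigma\in\{\pm 1\}}\sup_f\{\sigma a(f)-b(f)\}=\sup_f\{|a(f)|-b(f)\}$ with $b(f)\geq 0$. Specialising $\gamma=NA$ and collecting the constant $c_{\alpha}:=K_{\alpha}+\log 2$ (after absorbing $\log 2/(NA)\leq \log 2/A$ into the first term) produces exactly the claimed inequality. The proof is essentially mechanical; the only point that requires care is ensuring the convention used in the paper for $\mathfrak{D}_N$ (via the carré du champ $\Gamma_N$) produces the factor $\tfrac{1}{2NA}$ rather than $\tfrac{1}{NA}$, and this is precisely what \eqref{revers-dir} encodes. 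There is no real obstacle, since all the ingredients (entropy bound for initial data, reversibility, Feynman--Kac variational bound) are classical; the main thing to monitor is uniformity of $c_\alpha$ in $N$ and $\mathcal{V}$.
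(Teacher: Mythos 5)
Your proposal is correct and follows essentially the same route as the paper: entropy inequality with $H(\mu_N|\nu_\alpha^N)\le c_\alpha N$, the bound $e^{|Y|}\le e^Y+e^{-Y}$, Feynman--Kac together with the variational bound on the top eigenvalue of $\mathfrak{L}_N+\sigma\gamma\mathcal{V}_s$, reversibility to rewrite the Dirichlet form via \eqref{revers-dir}, and finally $\gamma=NA$. The only deviation is cosmetic and in your favour: where the paper invokes an asymptotic inequality $\lim_N \tfrac1N\log(a_N+b_N)\le\max\{\cdot\}$ (slightly mismatched with a statement at fixed $N$), you absorb the $\log 2$ term directly into $c_\alpha$, giving the non-asymptotic bound exactly as stated, and you also spell out the legitimate interchange $\max_{\sigma}\sup_f=\sup_f\max_{\sigma}$ that the paper leaves implicit.
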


\begin{proof}
    % \begin{align}\label{cH}
    %     \text{H}(\mu_N|\nu_\alpha^N)\leq Nc_{\alpha}
    % \end{align};
%%
        % \begin{align}\label{entropy}
        %     \text{H}(\mu|\nu)=\int_{\Omega_N}f\log f\rmd\nu,
        %     \qquad\text{where}\quad
        %     f=\frac{\rmd\mu}{\rmd\nu}
        % \end{align}
For $\mu$ and $\nu$ two probability measures on $\Omega_N$, the relative entropy of $\mu$ with respect to $\nu$ is defined as
    % \begin{align}
    %     \text{H}(\mu|\nu)
    %     :=\sup_{g:\Omega_N\to\mathbb{R}}
    %     \bigg\{
    %     \int_{\Omega_N}
    %         g\rmd \mu
    %     -\log\int_{\Omega_N}e^g\rmd\nu    
    %     \bigg\}.
    % \end{align}
    \begin{align}
        \text{H}(\mu|\nu)
        :=\sup_{g:\Omega_N\to\mathbb{R}}
        \big\{
            \mu(g)
        -\log\nu(e^g)    
        \big\}.
    \end{align}
Fixed $\alpha\in(0,1)$, the measure $\nu_\alpha^N$ is absolutely continuous w.r.t. $ \mu_N$, thus from \cite[Theorem 8.3]{KL:book}, $\text{H}(\mu_N\mid\nu_\alpha^N)=\nu_\alpha^N(g\log g)$, with $g=\frac{\rmd\mu}{\rmd\nu}$. From this it follows that because $\alpha\neq 0,1$, there exist a positive constant $c_{\alpha}$ dependent of $\alpha$ and independent of $N$, such that $\text{H}(\mu_N|\nu_\alpha^N)\leq Nc_{\alpha}$.

From the entropy inequality \cite[Page 338]{KL:book}, for any $A>0$ and $V:\Omega_N\to\mathbb{R}$ bounded, it holds that $NA\mu_N(V)\leq\text{H}(\mu_N\mid\nu_\alpha^N)+\log\nu_\alpha^N(e^{NAV})$, allowing us to shift the initial measure to the reference measure $\nu_\alpha^N$. Using the fact that $e^{|x|}\leq e^{x}+e^{-x}$ and $\lim_{N\to+\infty}\tfrac1N\log(a_N+b_N)\leq \max\{\lim_{N\to+\infty}\tfrac1N\log a_N,\lim_{N\to+\infty}\tfrac1N\log b_N\}$ and then applying Feynmann-Kac's formula as in \cite[Page 14]{BMNS}, leads then to the upper bound in the statement.

\end{proof}

In order to apply the previous result, we will recurrently invoke the next.
\begin{Prop}\cite[Lemma 4.3.2]{phd:pedro}.\label{prop:pedro}
	Assume there exists a family $ \mathcal{F} $ of functions $ F_{N,\eps}:[0,T]\times \mathcal{D}([0,T],\Omega)\to\mathbb{R} $ satisfying
	\begin{align*}
		\sup_{\substack{\eps\in(0,1),N\geq1\\s\in[0,T],\eta_{\cdot}\in\mathcal{D}([0,T],\Omega)}}
		\big|F_{N,\eps}(s,\eta_\cdot)\big|\leq M<\infty.
	\end{align*}
	Above, the interval  (0,1) for $ \eps $ is arbitrary. We also assume that for all $ t\in[0,T] $,
	\begin{align*}
		\limsup_{\eps\to0^+}\limsup_{N\to+\infty}
		\mathbb{E}_{\mu_N}
		\left[
		\abs{
			\int_0^tF_{N,\eps}(s,{\eta_\cdot})\rmd s
		}
		\right]=0.
	\end{align*}
	Then we have for all $ \delta>0 $,
	\begin{align*}
		\limsup_{\eps\to0^+}\limsup_{N\to+\infty}\mathbb{P}_{\mu_N}
		\left(
		\sup_{t\in[0,T]}
		\abs{
			\int_0^t F_{N,\eps}(s,{\eta_\cdot})\rmd s
		}
		>\delta
		\right)=0.
	\end{align*}
\end{Prop}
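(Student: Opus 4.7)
The plan is to exploit the uniform bound $|F_{N,\eps}|\leq M$ to reduce a uniform-in-$t$ estimate to one at finitely many time points, to which the $L^1$ hypothesis applies directly via Markov's inequality.

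Set $G_{N,\eps}(t,\eta_\cdot):=\int_0^t F_{N,\eps}(s,\eta_\cdot)\,\rmd s$. Since $|F_{N,\eps}|\leq M$ holds pointwise and uniformly in all parameters, each realisation $t\mapsto G_{N,\eps}(t,\eta_\cdot)$ is deterministically $M$-Lipschitz in $t$. Fix $\delta>0$ and choose a partition $0=t_0<t_1<\dots<t_n=T$ with mesh $h=\delta/(2M)$; crucially, $n$ depends only on $T,M,\delta$ and not on $N$ or $\eps$. The Lipschitz property together with the triangle inequality yield, deterministically,
\begin{align}
\sup_{t\in[0,T]}|G_{N,\eps}(t,\eta_\cdot)|\leq \frac{\delta}{2}+\max_{0\leq i\leq n}|G_{N,\eps}(t_i,\eta_\cdot)|,
\end{align}
so that the event $\{\sup_{t\in[0,T]}|G_{N,\eps}(t,\eta_\cdot)|>\delta\}$ is contained in the union $\bigcup_{i=0}^n\{|G_{N,\eps}(t_i,\eta_\cdot)|>\delta/2\}$. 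A union bound followed by Markov's inequality then produces
\begin{align}
\mathbb{P}_{\mu_N}\Big(\sup_{t\in[0,T]}|G_{N,\eps}(t,\eta_\cdot)|>\delta\Big)\leq \frac{2}{\delta}\sum_{i=0}^n \mathbb{E}_{\mu_N}\big[|G_{N,\eps}(t_i,\eta_\cdot)|\big].
\end{align}

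Since the sum involves an $(N,\eps)$-independent number of terms, taking $\limsup_{N\to\infty}$ and then $\limsup_{\eps\to 0^+}$ and invoking the hypothesis at each of the finitely many $t_i$'s concludes the argument. I do not anticipate a genuine obstacle: the only subtlety is that the partition be chosen independently of $N$ and $\eps$, which is precisely what the uniform Lipschitz constant $M$ permits. In essence, the $L^\infty$ control on $F_{N,\eps}$ is doing the work of upgrading pointwise-in-$t$ convergence to uniform-in-$t$ convergence, and the rest is a routine union bound.
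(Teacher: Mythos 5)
Your proof is correct. In the paper, Proposition~\ref{prop:pedro} is quoted as an external result (from \cite{phd:pedro}) and no proof is reproduced, so there is nothing to compare against directly; but the argument you give is exactly the standard one used to establish this kind of statement. The uniform bound $|F_{N,\eps}|\le M$ makes $t\mapsto G_{N,\eps}(t,\eta_\cdot)=\int_0^t F_{N,\eps}(s,\eta_\cdot)\,\rmd s$ deterministically $M$-Lipschitz, so after fixing a grid of mesh $\delta/(2M)$ (independent of $N$ and $\eps$) you reduce the supremum over $[0,T]$ to a maximum over finitely many grid points, up to a deterministic error of $\delta/2$; a union bound plus Markov's inequality then transfers the $L^1$-vanishing hypothesis at each fixed time to the desired uniform-in-time probability estimate. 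The bookkeeping is right: the number of grid points depends only on $T$, $M$, $\delta$, so the iterated $\limsup$ passes through the finite sum.

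One cosmetic point: the containment you need is $\{\sup_{t}|G_{N,\eps}(t)|>\delta\}\subset\bigcup_i\{|G_{N,\eps}(t_i)|>\delta/2\}$, which follows from the contrapositive of $\max_i|G_{N,\eps}(t_i)|\le\delta/2\Rightarrow\sup_t|G_{N,\eps}(t)|\le\delta$; your display $\sup_t|G_{N,\eps}(t)|\le\delta/2+\max_i|G_{N,\eps}(t_i)|$ gives exactly this, so the step is sound, but it is worth making the contrapositive explicit so the reader does not stumble over the direction of the inequality.
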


We now state and prove the required replacement lemmas. One can see that the proof of the next lemma follows the same reasoning as \cite[Lemma 4.6]{s2f}. 
\begin{Lemma}\label{lem:rest}
For each $x\in\T_N$ fixed and every $t\in [0,T]$, let $\varphi:[0,T]\times \T_N\to\mathbb{R}$ be such that $M_{t,\varphi}\equiv\int_0^t\norm{\varphi(s,\cdot)}_{\infty}\rmd s<\infty$. It holds that
    \begin{align}
        \mathbb{E}_{\mu_N}
        \bigg[
        \bigg|
        \int_{0}^t
        \frac1N \sum_{x\in\T_N}\varphi(s,x)
        \mathbf{g}_{\ell_N}(\tau_x\eta_{N^2s})
        \rmd s
        \bigg|
        \bigg]
        \leq 
        \sqrt{2 c_{\alpha}}M_{t,\varphi}
        \sqrt{\frac{\underline{r}_N}{tN}}
        ,
    \end{align}
with $\mathbf{g}_{\ell_N}$ as in \eqref{g} and
\begin{align}\label{total-rate}
    \underline{r}_N
    :=
    \bigg|
    \sum_{j=0}^{\ell_N}
    \sum_{z\in\mathcal{R}_{N,j}}
    \tau_z\mathbf{r}_{N}^j
    \bigg|_{\infty}.
\end{align}
\end{Lemma}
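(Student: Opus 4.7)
The plan is to apply Proposition~\ref{prop:base} to the function
$$\mathcal{V}_s(\eta):=\tfrac{1}{N}\sum_{x\in\T_N}\varphi(s,x)\mathbf{g}_{\ell_N}(\tau_x\eta)$$
with a parameter $A>0$ to be optimised, reducing the problem to estimating, for each $s\in[0,t]$, the supremum $\sup_f\big\{\big|\int\mathcal{V}_s(\eta)f(\eta)\rmd\nu_\alpha^N(\eta)\big|-\mathfrak{D}_N(\sqrt{f}|\nu_\alpha^N)/(2NA)\big\}$ over densities $f$, at a cost $c_\alpha/A$ from the entropy.

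Next I would unfold $\mathbf{g}_{\ell_N}$ using its explicit form from \eqref{ass:h-form}, obtaining a triple sum over $(j,i,x)$. After the change of variables $y=x+i$, one rewrites
$$\mathcal{V}_s(\eta)=-\tfrac{1}{N}\sum_{y\in\T_N}\Psi_{y,s}(\eta)\,(\eta(y)-\eta(y+1)),$$
where each weight $\Psi_{y,s}$ is a linear combination of translates $\tau^y\mathbf{c}_N^j$ with coefficients of the form $\varphi(s,y-i)\,\mathrm{sign}(\underline r_{N,j})$; in particular, $\Psi_{y,s}$ is independent of the exchange $\theta_{y,y+1}$, since each $\mathbf{c}_N^j$ is independent of $\eta(0),\eta(1)$. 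Applying the integration-by-parts identity \eqref{by-parts} on every bond produces an integral against $\nabla_{y,y+1}f$, which I factor as $\big(\sqrt{f(\theta_{y,y+1}\eta)}-\sqrt{f(\eta)}\big)\big(\sqrt{f(\theta_{y,y+1}\eta)}+\sqrt{f(\eta)}\big)$. Cauchy--Schwarz together with the permutation-invariance of $\nu_\alpha^N$ and $\int f\,\rmd\nu_\alpha^N=1$ bounds the ``plus'' factor uniformly, leaving a per-bond contribution of the form $|\Psi_{y,s}|_\infty\sqrt{I_{y,y+1}(\sqrt{f})}$, where $I_{y,y+1}(g):=\int(g(\theta_{y,y+1}\eta)-g(\eta))^2\,\rmd\nu_\alpha^N$.

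Applying Young's inequality with a parameter $B$ to each bond term splits the total into a ``rate'' piece of order $B\sum_{y}|\Psi_{y,s}|_\infty^2/N$ and a ``gradient'' piece of order $(1/B)\sum_y I_{y,y+1}(\sqrt{f})/N$. Using the lower bound $\mathfrak{D}_N(\sqrt{f}|\nu_\alpha^N)\geq c\,\mathfrak{p}_N N^2\sum_y I_{y,y+1}(\sqrt{f})$ provided by the SSEP perturbation in Regime~I, the gradient piece is absorbed by the Dirichlet penalty $\mathfrak{D}_N(\sqrt{f}|\nu_\alpha^N)/(2NA)$ upon choosing $B$ proportional to $A/(\mathfrak{p}_N N)$. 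The surviving rate piece, once integrated in $s$, is matched with $\underline r_N$ via the key identity $|\tau^y\mathbf{c}_N^j(\eta)(\eta(y)-\eta(y+1))|=\tau_y\mathbf{r}_N^j(\eta)$ together with the definition of $\mathcal{R}_{N,j}$ as the smallest interval containing $A_j\cup\{0,1\}$, so that the triple sum collapses to $|\sum_j\sum_{z\in\mathcal{R}_{N,j}}\tau_z\mathbf{r}_N^j|_\infty=\underline r_N$. Finally, optimising $A$ to balance $c_\alpha/A$ against the resulting time integral yields the square-root bound in the statement.

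The main obstacle is the combinatorial reorganisation identifying the sum arising from Young's inequality with the quantity $\underline r_N$ from \eqref{total-rate}; the triple index $(j,i,x)$ produces overlapping contributions at each bond whose collective sup-norm must be matched against $|\sum_j\sum_{z\in\mathcal{R}_{N,j}}\tau_z\mathbf{r}_N^j|_\infty$, which requires the geometry of $\mathcal{R}_{N,j}$ to be used carefully. A secondary subtlety is that, in order to obtain $M_{t,\varphi}$ linearly (rather than $\int_0^t\norm{\varphi(s,\cdot)}_\infty^2\,\rmd s$) in the final bound, the factor $\norm{\varphi(s,\cdot)}_\infty$ must be carried outside the Young split before integrating in time, as in the analogous \cite[Lemma 4.6]{s2f}.
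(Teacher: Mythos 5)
The high-level skeleton (Proposition~\ref{prop:base}, unfold $\mathbf{g}_{\ell_N}$, integrate by parts with \eqref{by-parts}, split the gradient via a Young-type inequality, optimise parameters) matches the paper. However, the central step of your proposal diverges from the paper's proof in a way that does not recover the stated bound.

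The problem is the Cauchy--Schwarz step: you bound the per-bond integrand by $|\Psi_{y,s}|_\infty\sqrt{I_{y,y+1}(\sqrt{f})}$, which strips the kinetic constraint out of the gradient factor. With an unweighted $\sum_y I_{y,y+1}(\sqrt{f})$ you are then forced to control it by the SSEP part of the Dirichlet form, $\mathfrak{D}_N(\sqrt f|\nu_\alpha^N)\ge c\,\mathfrak{p}_N N^2\sum_y I_{y,y+1}(\sqrt f)$, so the optimisation produces an extra $\mathfrak{p}_N^{-1/2}$. The stated bound $\sqrt{2c_\alpha}\,M_{t,\varphi}\sqrt{\underline r_N/(tN)}$ contains no $\mathfrak{p}_N$, and in regimes where $\mathfrak{p}_N N$ is not bounded below (e.g.\ $\mathfrak{p}_N\sim N^{-3/2}$, which is allowed since $\mathfrak{p}_N N^2\to\infty$), your version of the estimate would actually diverge and be useless in the tightness argument where this lemma is invoked with $t=\gamma$ small. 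The paper avoids this entirely: it applies Young's inequality \emph{term by term} in $(j,i,x)$, keeping the non-negative weight $\tau_{x+i}\mathbf{c}_N^j$ inside both halves. The gradient half then carries $\tau_{x+i}\mathbf{c}_N^j(\mathbf{e}_{x+i,x+i+1}+\mathbf{e}_{x+i+1,x+i})=\tau_{x+i}\mathbf{r}_N^j$ as a weight, so it is absorbed by the \emph{constrained} part of $\mathfrak{D}_N$ directly, with no $\mathfrak{p}_N$ loss --- this is \eqref{rest:to-dir}.

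The second issue is the static (``rate'') half. Your version is $B\sum_y|\Psi_{y,s}|_\infty^2/N$, which is a sum of squares of sup-norms and is quadratic in the size of the constraint; it also produces $\int_0^t\norm{\varphi(s,\cdot)}_\infty^2\,\rmd s$ after time integration rather than $M_{t,\varphi}$, a concern you acknowledge but do not resolve. These do not reduce to $\underline r_N=\big|\sum_j\sum_{z\in\mathcal{R}_{N,j}}\tau_z\mathbf{r}_N^j\big|_\infty$, which is the sup-norm of a single sum and is linear in the rates. The paper's static half, \eqref{rest:to-zero}, keeps the weight $\tau_{x+i}\mathbf{c}_N^j(\mathbf{e}+\mathbf{e})(\theta_{x+i,x+i+1}+\mathbf{1})[f]$, uses that this weight is $\theta_{x+i,x+i+1}$-invariant (since $0,1\notin A_j$) together with the $\theta$-invariance of $\nu_\alpha^N$ to collapse $\theta[f]+f$ to $2f$, and then sums over $(j,i)$ to get exactly $\underline r_N\int f\,\rmd\nu_\alpha^N=\underline r_N$ linearly, with a single factor of $\norm{\varphi(s,\cdot)}_\infty$. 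To fix your argument you would need to keep $|\Psi_{y,s}|$ (or its decomposition into the non-negative pieces $\tau_y\mathbf{c}_N^j$) inside both Young halves rather than pulling out the sup-norm, and treat the static half by the invariance argument rather than by $L^\infty$--$L^2$ estimates.
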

\begin{proof}
    Proceeding as in the previous replacement lemmas, it is enough to estimate 
    \begin{align}\label{rest:var}
        \frac{c_{\alpha}}{A}
        +
    \int_0^t
        \sup_{f \text{ density}}
        \bigg\{
% cv    \bigg|
        \frac1N \sum_{x\in\T_N}\varphi(s,x)
        \int_{\Omega_N}
        \tau_x\mathbf{g}_{\ell_N}
        f\rmd\nu_\alpha^N
%    \bigg|
        -\frac{\mathfrak{D}_{N}(\sqrt{f}|\nu_\alpha^N)}{2N A}
        \bigg\}
    \rmd s
        .
    \end{align}
From the observation \eqref{by-parts} and Young's inequality,
\begin{align}
&\bigg|
    \frac1N \sum_{x\in\T_N}\varphi(s,x)
    \int_{\Omega_N}
    \tau_x\mathbf{g}_{\ell_N}
    f\rmd\nu_\alpha^N
\bigg|
% \\&
% \leq
% \frac{\norm{\varphi(s,\cdot)}_{\infty}}{2N}\sum_{x\in\T_N}
% \bigg|
% \int_{\Omega_N}
%     \sum_{n=0}^N
%     \beta_n\frac{1}{N+1}
%     \sum_{\ell=1}^{N}
%     \sum_{i=1}^{\ell}
%     \mathbf{s}_{\ell}^{n,N-n}(\tau_{x+i}\eta)\nabla^+\eta(x+i)
%     \nabla_{x+i+1,x+i}f(\eta)
% \rmd\nu_\alpha^N
% \bigg|
\\&\label{rest:to-dir}
\leq
\frac{\norm{\varphi(s,\cdot)}_{\infty}}{4NB}
\int_{\Omega_N}
\sum_{x\in\T_N}
\sum_{j=0}^{\ell_N}
\sum_{i=\min\{0,\underline{r}_{N,j}\}}^{\max\{0,\underline{r}_{N,j}\}}
    \tau_{x+i}\mathbf{c}_{N}^{j}
%    \tau_{x+i}\mathbf{a}
    \big|
    \nabla_{x+i+1,x+i}[\sqrt{f}]
    \big|^2
\rmd\nu_\alpha^N
\\&\label{rest:to-zero}
+
\frac{B\norm{\varphi(s,\cdot)}_{\infty}}{2N}
\int_{\Omega_N}
\sum_{x\in\T_N}
\sum_{j=0}^{\ell_N}
\sum_{i=\min\{0,\underline{r}_{N,j}\}}^{\max\{0,\underline{r}_{N,j}\}}
    \tau_{x+i}\mathbf{c}_{N}^{j}
%    \tau_{x+i}\mathbf{a}
    \big(
    \theta_{x+i+1,x+i}+\mathbf{1}
    \big)[f]
\rmd\nu_\alpha^N
,
\end{align}
for an arbitrary $B>0$. Let us focus on the term \eqref{rest:to-zero}. Because $\tau_{x+i}\mathbf{c}_{N}^{j}$ is independent of the occupation at the sites $x+i$ and $x+i+1$, and $\alpha$ is a constant profile, the term \eqref{rest:to-zero} is bounded from above by
\begin{align}
\frac{B\norm{\varphi(s,\cdot)}_{\infty}}{N}
\int_{\Omega_N}
\sum_{x\in\T_N}
\sum_{j=0}^{\ell_N}
\sum_{i\in \mathcal{R}_{N,j}}
\tau_{x-i}\mathbf{c}_{N}^{j}(\mathbf{e}_{x-i,x-i+1}+\mathbf{e}_{x-i+1,x-i})
%\tau_{x-i}\mathbf{a}
    \;f
\rmd\nu_\alpha^N
\\\leq 
\norm{\varphi(s,\cdot)}_{\infty}
B \underline{r}_N,
\end{align}
with $\underline{r}_N$ as in \eqref{total-rate}.

We now focus on \eqref{rest:to-dir} that is bounded from above by
\begin{align}
\frac{\norm{\varphi(s,\cdot)}_{\infty}}{4NB}
\sum_{x\in\T_N}
\int_{\Omega_N}
\sum_{i\in\T_N}
    \sum_{j=1}^{\ell_N}
    \tau_{x-i}\mathbf{r}_{N}^{m,j}
%    \tau_{x-i}\mathbf{a}
    \big|
    \nabla_{x+1,x}\sqrt{f}
    \big|^2
\rmd\nu_\alpha^N
\\
\leq 
\frac{\norm{\varphi(s,\cdot)}_{\infty}}{2BN^2}
\mathfrak{D}_N(\sqrt{f}|\nu_\alpha^N)
.
\end{align}

In this way, \eqref{rest:var} can be bounded from above by
\begin{align}
\frac{c_{\alpha}}{A}
+
BM_{T,\varphi}
\underline{r}_N
+\sup_{f \text{ density}}
\bigg\{
    \frac{1}{2N}\bigg(
    \frac{M_{t,\varphi}}{B N}
    -\frac{t}{A}
    \bigg)
    \mathfrak{D}_{N}(\sqrt{f}|\nu_\alpha^N)
\bigg\}
,
\end{align}
with $M_{t,\varphi}=\int_0^t\norm{\varphi(s,\cdot)}_{\infty}\rmd s$. Solving  
\begin{align}
    \begin{cases}
    \frac{t}{A}
    =\frac{M_{t,\varphi}}{B N}
    \\
    \frac{c_{\alpha}}{A}
    =BM_{t,\varphi}
\underline{r}_N
    \end{cases}
\end{align}
for $A$ and $B$ yields the upper bound in the statement of this lemma.

\end{proof}

\begin{Lemma}[One-block estimate]\label{lem:1block}
Fix $L\in\mathbb{N}_+$, $w\in\T_N$ and $y\notin B_{L}(w)$. For each $x\in\T_N$, let $\varphi_{x}:[0,T]\times \mathcal{D}_{\Omega_N}[0,T]\to\mathbb{R}$ be independent of the occupation-values in $\{x+w+r,x+y\}_{r\in B_{L}}$, and such that $\text{c}_{t,\varphi}:=\int_0^t\sup_{x\in\T_N,\eta\in\Omega_N}|\varphi_x(\cdot,s)|\rmd s<\infty $.

For every $t\in [0,T]$ it holds that
    \begin{align}\label{bound:lem1block}
        \mathbb{E}_{\mu_N}
        \bigg[
        \bigg|
        \int_{0}^t
        \frac{1}{N}\sum_{x\in\T_N}
        \varphi_{x}(s,\eta_{N^2s})
        \big(
        \inner{\tau_{x+w}\eta_{N^2s}}_{L}
        -\eta_{N^2s}(x+y)
        \big)
        \rmd s
        \bigg|
        \bigg]
        \leq 
        \sqrt{2c_\alpha}c_{t,\varphi}\frac{L+\rmd(w,y)|}{\sqrt{\mathfrak{p}_NN^2t}}
        .
    \end{align}
%with $c=2\text{c}_{t,\varphi}\sqrt{c_{\alpha}}$.
\end{Lemma}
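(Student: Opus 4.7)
The argument follows the standard entropy/Dirichlet form scheme. The essential ingredient is the SSEP perturbation in $\mathfrak{L}_N$, which supplies a nearest-neighbour Dirichlet form of strength $\mathfrak{p}_N N^2$ and is thus strong enough to transport an occupation value across any finite number of sites at controlled cost.

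First I would apply Proposition \ref{prop:base} with a parameter $A > 0$, reducing the bound on the expectation to
\begin{equation}
\frac{c_\alpha}{A} + \int_0^t \sup_{f \text{ density}}\bigg\{\mathcal{V}_s(f) - \frac{\mathfrak{D}_N(\sqrt{f}|\nu_\alpha^N)}{2NA}\bigg\}\,\rmd s,
\end{equation}
where $\mathcal{V}_s(f)$ denotes the integral of the expression inside the absolute value against $f\,\rmd\nu_\alpha^N$. Writing $\inner{\tau_{x+w}\eta}_L - \eta(x+y) = L^{-1}\sum_{r\in B_L}(\eta(x+w+r) - \eta(x+y))$, for each pair $(x,r)$ I would decompose this difference as a telescoping sum $\sum_{i=0}^{k-1}(\eta(z_{i+1}) - \eta(z_i))$ along a nearest-neighbour path of length $k \leq L + \rmd(w,y)$, and then apply the integration-by-parts identity \eqref{by-parts} bond-by-bond.

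Combining the factorization $\nabla_{z,z+1}[f] = (\sqrt{f}(\theta_{z,z+1}\eta) - \sqrt{f}(\eta))(\sqrt{f}(\theta_{z,z+1}\eta) + \sqrt{f}(\eta))$ with Young's inequality at parameter $B > 0$, each bond integral is bounded on one side by $\tfrac{1}{8B}$ times an SSEP Dirichlet summand and on the other side by $2B\|\varphi_x(s,\cdot)\|_\infty$. Summing over bonds in all paths and over $x \in \T_N$, and using that $\mathfrak{D}_N(\sqrt{f}|\nu_\alpha^N) \geq \tfrac{1}{2}\mathfrak{p}_N N^2 \sum_z \int(\sqrt{f}(\theta_{z,z+1}\eta)-\sqrt{f}(\eta))^2 \,\rmd\nu_\alpha^N$, the Dirichlet contributions can be absorbed into the $\mathfrak{D}_N/(2NA)$ term by taking $B$ proportional to $A(L+\rmd(w,y))\|\varphi\|_\infty/(\mathfrak{p}_N N^2)$. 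The uniform remainder integrates in $s$ to a constant multiple of $c_{t,\varphi}$, and optimizing $A$ against the entropy cost $c_\alpha/A$ then yields the stated bound, with the $1/\sqrt{\mathfrak{p}_N N^2 t}$ scaling reflecting the strength of the SSEP perturbation.

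The main technical point is the integration-by-parts step: on a bond $\{z,z+1\}$ it requires $\varphi_x$ to be $\theta_{z,z+1}$-invariant, a property granted by hypothesis at the endpoints $x+y$ and $x+w+r$ but not necessarily along interior bonds of an arbitrary path. I would handle this by routing each path through the distinguished set $\{x+w+r\}_{r\in B_L}\cup\{x+y\}$, where independence of $\varphi_x$ holds by assumption; any residual non-invariance at a bond contributes an extra term of the form $\int(\varphi_x(\theta\eta)-\varphi_x(\eta))(\eta(z)-\eta(z+1))f\,\rmd\nu_\alpha^N$, which is absorbed using the uniform sup-norm control on $\varphi_x$ and the overall path length $L+\rmd(w,y)$.
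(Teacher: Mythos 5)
Your proposal has a genuine gap at the integration-by-parts step, and the fix you outline in the final paragraph does not repair it. You propose to telescope the occupation difference first, writing $\eta(x+w+r)-\eta(x+y)=\sum_i(\eta(z_{i+1})-\eta(z_i))$ along a nearest-neighbour path, and then apply \eqref{by-parts} on each bond $\{z_i,z_{i+1}\}$. That identity requires the test function $\varphi_x$ to be invariant under $\theta_{z_i,z_{i+1}}$, which the hypothesis grants only at the endpoints $\{x+w+r\}_{r\in B_L}\cup\{x+y\}$, not at interior path sites. Your remedy — decompose $\varphi_x$ into $\theta$-symmetric and antisymmetric parts and treat the residual $\int(\varphi_x(\theta_{z,z+1}\eta)-\varphi_x(\eta))(\eta(z)-\eta(z+1))f\,\rmd\nu_\alpha^N$ as an error — fails because that residual is $O(\|\varphi_x\|_\infty)$ per bond, not small: after the $\tfrac1{LN}$ prefactor, summing over the $\sim L$ choices of $r$, the $\sim(L+\rmd(w,y))$ bonds per path and $N$ values of $x$ leaves an $O(\|\varphi\|_\infty(L+\rmd(w,y)))$ contribution that is not coupled to any variational parameter and would dominate the target bound of order $1/\sqrt{\mathfrak{p}_N N^2 t}$.

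The correct route, as in the paper, reverses the order of operations: apply \eqref{by-parts} once, directly with respect to the non-local swap $\theta_{x+w+r,x+y}$ between the two sites appearing in the difference — here the hypothesis on $\varphi_x$ is exactly what is needed — to produce $|\nabla_{x+w+r,x+y}[f]|$. Then telescope this gradient of $f$ (not of $\eta$) through the composition $\theta_{x+w+r,x+y}=P_S$ of nearest-neighbour transpositions, $\nabla_{x+w+r,x+y}[f](\eta)=\sum_{n}\nabla_{x_n,x_{n+1}}[f](P_n\eta)$. This second step is a pure algebraic identity for $f$ and uses no invariance of $\varphi_x$ at interior sites; only the $\nu_\alpha^N$-invariance under the transpositions is needed to push the resulting summands into the SSEP Dirichlet form at strength $\mathfrak{p}_NN^2$. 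From there your Young-inequality bookkeeping and the optimization over $A,B$ are as you describe and give the stated estimate.
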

\begin{proof}
From Proposition \ref{prop:base}, it is enough to estimate
    \begin{multline}\label{1block:var}
        \frac{c_{\alpha}}{A}
        +
        \int_0^t
        \sup_{f}
        \bigg\{
%        \bigg|
        \int_{\Omega_N}
        \frac{1}{N}\sum_{x\in\T_N}
        \varphi_{x}(s,\eta)
        \big(
        \inner{\tau_{x+w}\eta}_{L}
        -\eta(x+y)
        \big)
        f(\eta)\rmd\nu_\alpha^N(\eta)
%        \bigg|
        -\frac{\mathfrak{D}_{N}(\sqrt{f}|\nu_\alpha^N)}{2N A}
        \bigg\}
        \rmd s
    \end{multline}
with the $\sup$ taken over the set of densities with respect to $\nu_\alpha^N$. Expressing 
    \begin{align}
        \inner{\tau_{x+w}\eta}_{L}
        -\eta(x+y)
        =
        \frac{1}{L}
        \sum_{r\in B_{L}}
        \big\{
        \eta(x+w+r)-\eta(x+y)
        \big\},
    \end{align}
    from \eqref{by-parts}, we can bound from above the integral over $\Omega_N$ in \eqref{1block:var} by 
    \begin{align}\label{1block:0}
        \frac{\text{c}_{s,\varphi}'}{2L N}
        \sum_{x\in\T_N}
        \sum_{r\in B_{L}}
        \int_{\Omega_N}
       	(\mathbf{e}_{x+w+r,x+y}
        +\mathbf{e}_{x+y,x+w+r})
        |
        \nabla_{x+w+r,x+y}[f]
        |
        \rmd\nu_\alpha^N,
    \end{align}
where $\text{c}_{s,\varphi}':=\sup_{x\in\T_N}|\varphi_x(\cdot,s)|_{\infty}$.
    
Let us decompose
    \begin{align}
        \nabla_{x+w+r,x+y}[f](\eta)
        =\sum_{n=0}^{S-1}
        \nabla_{x_{n},x_{n+1}}[f](P_n\eta)
        , 
    \end{align}
with $S=2\rmd(w+r,y)-1$ and where, denoting by $\mathbf{1}_{\Omega_N}$ the identity operator in $\Omega_N$,
\begin{align}
            \begin{cases}
            P_{n+1}
            =\theta_{x_n,x_{n+1}}P_n,
            &0\leq n\leq S-1,\\
            P_0:=\mathbf{1}_{\Omega_N}
            ,&
        \end{cases}
\end{align}
and the ordered sequence of bonds $(\{x_n,x_{n+1}\})_n$ is defined through path exchanging the occupation-value at the sites $x+w+r$ and $x+y$. Precisely, letting (say) $x_0=x+y$, then $x_{S/2}=x+w+r$, $x_S=x_0$ and $\rmd(x_n,x_{n+1})=1$ for each $n$.

From the inequality $|a-b|\leq \frac{1}{2A}(\sqrt{a}-\sqrt{b})^2+A(a+b)$, for any $A>0$ and $a,b>0$, the quantity in \eqref{1block:0} can be bounded from above by
    \begin{multline}
        2\text{c}_{t,\varphi}\rmd(w+r,y)A
        +
        \frac{\text{c}_{s,\varphi}'}{2L NA}
        \sum_{r\in B_{L}}
        \sum_{n=0}^{2(L+\rmd(w,y))-1}
        \int_{\Omega_N}
        \sum_{x\in\T_N}
        \big|
        \nabla_{x_{n},x_{n+1}}\sqrt{f}(P_n\eta)
        \big|^2
        \rmd\nu_\alpha^N
        \\
        \leq
        2\text{c}_{s,\varphi}'(L +\rmd(w,y))A
        +
        \text{c}_{s,\varphi}'\frac{L+\rmd(w,y)}{ANN^2\mathfrak{p}_N}
        \mathfrak{D}_N^{}(\sqrt{f}|\nu_\alpha^N),
    \end{multline}

In this way, \eqref{1block:var} can be estimated from above by
    \begin{align}
        \frac{c_{\alpha}}{B}
        +
        2\text{c}_{t,\varphi}(L+\rmd(w,y))A
        +
        \frac{1}{N}\bigg(
        \frac{\text{c}_{t,\varphi}(L+\rmd(w,y))}{AN^2\mathfrak{p}_N}
        -\frac{t}{2B}
        \bigg)
        \mathfrak{D}_N(\sqrt{f}|\nu_\alpha^N)
        .
    \end{align}
Solving for $A$ and $B$ the system 
\begin{align}
    \begin{cases}
        \frac{t}{2B}
        =\frac{\text{c}_{t,\varphi}(L+\rmd(w,y))}{AN^2\mathfrak{p}_N}
        ,\\
        \frac{c_{\alpha}}{B}
        =
        2\text{c}_{t,\varphi}(L+\rmd(w,y))A
    \end{cases}
\end{align}
concludes the proof.    

\end{proof}

\begin{Lemma}[Two-block estimate]\label{lem:2block}
Fix $\ell,L\in\mathbb{N}_+$ such that $\ell>L$, and $y,w\in\T_N$ such that $B_{\ell}(w)\cap B_{L}(y)=\emptyset$ and $y<w$. For each $x\in\T_N$, let $\varphi_{x}:[0,T]\times \mathcal{D}_{\Omega_N}[0,T]$ be independent of the occupation-values in $B_{\ell}(x+w)\cup B_L(x+y)$ and such that $c_{t,\varphi}:=\int_{0}^t|\varphi_x(s,\cdot)|_{\infty}\rmd s<\infty$.

%Moreover, fix $k^{\star}<L/2$ such that there exist mobile clusters of length $k^{\star}-1$, and let $r_\star$ be the smallest, positive, rate for a jump within one. 
For every $t\in [0,T]$ it holds that
    \begin{multline}
        \mathbb{E}_{\mu_N}
        \bigg[
        \bigg|
        \int_{0}^t
        \frac{1}{N}\sum_{x\in\T_N}
        \varphi_{x}(s,\eta_{N^2s})
        \big(
        \inner{\tau_{x+w}\eta_{N^2s}}_{\ell}-\inner{\tau_{x+y}\eta_{N^2s}}_{L}
        \big)
        \rmd s
        \bigg|
        \bigg]
%%%
   \\
   \leq 
    \frac{\text{c}_{t,\varphi}k^{\star}}{L}
    +
    2\sqrt{2c_\alpha\text{c}_{t,\varphi}}
    \kappa^\star\sqrt{
    \frac{L^2}{\mathfrak{p}_N N^2t}
    %%%    
    +
    \frac{(\rmd(w+\ell,y))^2}{r_\star N^2t}
    }
    .
    \end{multline}
\end{Lemma}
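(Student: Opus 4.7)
The plan is to extend the entropy--Dirichlet form scheme of Lemma \ref{lem:1block} by combining two transport mechanisms: the SSEP perturbation for short-range exchanges inside the smaller block, and the mobile clusters of Regime \ref{I} for the long-range transport between the two disjoint blocks. First I would apply Proposition \ref{prop:base} with reference parameter $\alpha\in(0,1)$ to reduce the expectation to the variational expression
\begin{align*}
\frac{c_\alpha}{A} + \int_0^t \sup_{f} \bigg\{ \int_{\Omega_N} \frac{1}{N}\sum_{x\in\T_N} \varphi_x(s,\eta)\big(\inner{\tau_{x+w}\eta}_\ell - \inner{\tau_{x+y}\eta}_L\big) f\, \dnu - \frac{\mathfrak{D}_N(\sqrt{f}|\nu_\alpha^N)}{2NA}\bigg\}\, \rmd s,
\end{align*}
where the supremum runs over $\nu_\alpha^N$-densities.

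The second step is the algebraic decomposition
\begin{align*}
\inner{\tau_{x+w}\eta}_\ell - \inner{\tau_{x+y}\eta}_L = \frac{1}{\ell L}\sum_{r=0}^{\ell-1}\sum_{r'=0}^{L-1}\big(\eta(x+w+r) - \eta(x+y+r')\big),
\end{align*}
which reduces matters to pair-exchanges between $x+y+r'$ and $x+w+r$. For each such pair I would design a transport path composed of two segments: (a) nearest-neighbour SSEP exchanges moving the occupation from $x+y+r'$ to the boundary site $x+y+L-1$ of $B_L(x+y)$, of length at most $L$, each bond contributing rate at least $\mathfrak{p}_N N^2$; (b) nearest-neighbour exchanges executed via the mobile-cluster transport property \eqref{mtransport}, carrying the occupation from $x+y+L-1$ across the gap and into the target site $x+w+r$, of length at most $\rmd(w+\ell,y)$, each bond contributing rate at least $r_\star N^2$. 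Telescoping the increment along the path as in Lemma \ref{lem:1block} and applying the inequality $|a-b| \leq \tfrac{1}{2B}(\sqrt{a}-\sqrt{b})^2 + B(a+b)$ with two independent parameters $B_1,B_2>0$ for the two segments, the integrand splits into a linear contribution proportional to $c_{t,\varphi}\kappa^\star(B_1L + B_2\rmd(w+\ell,y))$ plus a Dirichlet-form contribution that can be absorbed into $\mathfrak{D}_N(\sqrt{f}|\nu_\alpha^N)$ after choosing $A$ so that the coefficient of the Dirichlet form is non-positive. Optimizing $A$ and $B_1,B_2$ separately then yields the two summands inside the square root.

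The main obstacle lies in segment (b): for each nearest-neighbour bond along the mobile-cluster path, the requirement that the underlying jump rate be bounded below by $r_\star$ demands that a mobile cluster of length $\kappa^\star$ be adjacent to that bond at the moment of the exchange, as guaranteed by the transitions \eqref{mobility}--\eqref{mtransport}. Enforcing this requires first reorganizing the configuration inside $B_L(x+y)$ via a bounded sequence of SSEP moves to expose a usable cluster seed, which is possible as soon as $L \geq \kappa^\star$. The fraction of pair-indices $(r,r')$ for which no such seed can be produced is at most $\kappa^\star/L$, producing the deterministic additive correction $\tfrac{c_{t,\varphi}k^\star}{L}$; the factor $\kappa^\star$ in front of the square root then reflects the fact that each elementary mobile-cluster exchange is realized through a fixed number $\le \kappa^\star$ of elementary jumps. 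Verifying that all these configuration manipulations can be carried out in a uniformly controlled way is precisely where the $N$-independence of both $\kappa^\star$ and $r_\star$ in the definition of Regime \ref{I} is crucially used.
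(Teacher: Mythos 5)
Your high-level plan—reduce via Proposition \ref{prop:base}, pair off occupation variables between the two blocks, split each long-range exchange into a short leg at rate $\gtrsim \mathfrak{p}_N$ and a long mobile-cluster leg at rate $\gtrsim r_\star$, then optimize the free parameters—is exactly the paper's strategy, and your two-term form inside the square root is the right answer.

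However, your account of the $k^{\star}/L$ correction and of \emph{where the mobile cluster comes from} has a genuine gap, and this is the crux of the two-block estimate in Regime~I. You attribute the error term to ``the fraction of pair-indices $(r,r')$ for which no cluster seed can be produced''; but whether a cluster can be assembled is a property of the \emph{configuration} $\eta$, not of the pair-index, and a box that contains only particles (or only vacancies) admits no mobile cluster no matter how large $L$ is, so your claim ``possible as soon as $L\geq\kappa^\star$'' is false in general. The paper instead first fixes a tiled decomposition
$\inner{\tau_{x+w}\eta}_\ell-\inner{\tau_{x+y}\eta}_L=\tfrac1\ell\sum_{p\in LB_{\ell/L}}\sum_{r\in B_L}\{\eta(x+r+p+w)-\eta(x+r+y)\}$
(rather than your full cross-product over $(r,r')$), and then introduces, for each translate $p$, the configuration set $\Omega_{p,w,y}$ consisting of $\eta$ for which the two $L$-blocks jointly carry at least $k^\star+1$ particles and at least $k^\star+1$ vacancies. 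For $\eta\notin\Omega_{p,w,y}$, both $L$-blocks are almost full or almost empty, so the exclusion factors $\mathbf{e}_{r+p+w,r+y}$ and $\mathbf{e}_{r+y,r+p+w}$ vanish for all but $\sim k^\star$ values of $r\in B_L$; this \emph{configuration-based} counting—not a fraction of pair-indices—is what produces the additive $\sim k^\star/L$ term. Conversely, for $\eta\in\Omega_{p,w,y}$ the paper constructs an intermediate configuration $\eta^\star$ in which a mobile cluster has been \emph{assembled} inside a $2L$-box along the segment joining the blocks, at a cost of at most $\sim k^\star L$ SSEP exchanges (this, rather than your length-$\leq L$ segment~(a), is what accounts for the $\kappa^\star L$ scaling under the square root), and only then transports the mass across the gap in $\lesssim\kappa^\star\,\rmd(w+\ell,y)$ steps at rate $\geq r_\star$. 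Without the configuration cutoff $\Omega_{p,w,y}$ you cannot guarantee that the cluster-creation step is available at all, so the ``seed'' argument as written does not close the proof.
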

\begin{proof}
From Proposition \ref{prop:base}, it is enough to estimate
    \begin{multline}\label{2block:var}
        \frac{c_{\alpha}}{A}
        +
        \int_0^t
        \sup_{f \text{ density}}
        \bigg\{
        \bigg|
        \int_{\Omega_N}
        \frac{1}{N}\sum_{x\in\T_N}
        \varphi_{x}(s,\eta)
        \big(
        \inner{\tau_{x+w}\eta}_{\ell}-\inner{\tau_{x+y}\eta}_{L}
        \big)
        f(\eta)\rmd\nu_\alpha^N(\eta)
        \bigg|
        \\-\frac{1}{2NA}
        \mathfrak{D}_{N}(\sqrt{f}|\nu_\alpha^N)
        \bigg\}
        \rmd s
        .
    \end{multline}

Let $\ell$ be divisible by $L$. Otherwise, one can replace $\ell/L$ by $\floor{\ell/L}$ for the rest of the proof, obtaining in the next display an additive error of the order of $1/L$. For any set $A\subset \T_N$, and $r\neq0$, we shall write $\ndef{rA}=\{ra:\;a\in A\}$. Note that $\cup_{p\in L B_{\ell/L}}B_{L}(p)$ forms a covering of $B_{\ell}$ by non-intersecting sets, and clearly, $|L B_{\ell/L}|=\ell/L$. 
Let us then decompose
\begin{align}
    \inner{\tau_{x+w}\eta}_{\ell}-\inner{\tau_{x+y}\eta}_{L}
    =
    \frac{1}{\ell}
        \sum_{\substack{p\in L B_{\ell/L}\\r\in B_{L}}}
    \big\{
    \eta(x+r+p+w)-\eta(x+r+y)
    \big\}.
\end{align}
From \eqref{by-parts} and the above, and shortening \ndef{$f_x\equiv\tau_{-x}f$}, the absolute value in the first line of \eqref{2block:var} equals 
\begin{align}
\frac{1}{N\ell}\sum_{x\in\T_N}
\int_{\Omega_N}
\varphi_x(s,\eta)
\sum_{\substack{p\in L B_{\ell/L}\\r\in B_{L}}}
    \big\{
    \eta(x+r+y)-\eta(x+r+p+w)
    \big\}
    \nabla_{r+p+w,r+y}[f_x]
    \rmd\nu_{\alpha}^N
\\\leq
\frac{|\varphi_x(s,\cdot)|_{\infty}}{N\ell}
\sum_{x\in\T_N}
\sum_{p\in L B_{\ell/L}}
    \int_{\Omega_N}
\sum_{r\in B_{L}}    
    (\mathbf{e}_{r+p+w,r+y}+\mathbf{e}_{r+y,r+p+w})
    \big|
    \nabla_{r+p+w,r+y}[f_x]
    \big|
    \rmd\nu_{\alpha}^N,
    \label{2block:base}
\end{align}
where we used the translation invariance of $\nu_\alpha^N$.

We are going to reduce our state space to configurations where is possible to construct a mobile cluster. For each $p,w,y$ as in the previous display, let us consider the auxiliary space
\begin{align}
    \begin{split}\label{2block:right-dens}
    \Omega_{p,w,y}
    =
    \bigg\{
    \eta\in\Omega_N\mid 
    \;
    &\; 
    \big(
    \inner{\tau_{p+w}\eta}_{L}
    \geq \frac{k^{\star}+1}{L}
    \;\;\text{or}\;\; 
    \inner{\tau_{y}\eta}_{L}
    \geq \frac{k^{\star}+1}{L}
    \big)
    \\
    \text{and}\;&\; 
    \big(
    \inner{\tau_{p+w}\overline{\eta}}_{L}
    \geq \frac{k^{\star}+1}{L}
    \;\;\text{or}\;\; 
    \inner{\tau_{y}\overline{\eta}}_{L}
    \geq \frac{k^{\star}+1}{L}
    \big)
    \bigg\}.
    \end{split}
\end{align}
It is important to note that if $\eta\notin \Omega_{p,w,y}$, then there are at most $k^{\star}+1$ values of $r\in B_{L}$ such that $\mathbf{e}_{r+p+w,r+y}(\eta),\mathbf{e}_{r+y,r+p+w}(\eta)\neq 0$. As such, the quantity in \eqref{2block:base} restricted to $(\Omega_{p,w,y})^c$ is no larger than $2|\varphi_x(s,\cdot)|_{\infty}(k^{\star}+1)/L$.

If $\eta\in\Omega_{p,w,y}$ then either:
\begin{enumerate}
    \item At least one of the boxes $B_L(p+w)$ and $B_L(y)$ contains at least $k^{\star}+1$ particles and at least $k^{\star}+1$ vacant sites;
    \item The box $B_L(p+w)$ (resp. $B_L(y)$) contains at least $k^{\star}+1$ particles and the box $B_L(y)$ (resp. $B_L(p+w)$) contains at least $k^{\star}+1$ vacant sites.
\end{enumerate}
Let us focus on (2), and suppose that $B_L(p+w)$ has the target number of particles and $B_L(y)$ of vacant sites. Note that if some box contains \textit{less} than $k^{\star}+1<L/2$ particles, then it contains \textit{more} than $k^{\star}+1$ vacant sites. In particular, there is a pair of boxes $B_L(x_0),B_L(x_0+1)$, contained in the segment connecting $p+w$ and $y$, such that $B_L(x_0)$ contains at least $k^{\star}+1$ particles and $B_L(x_0+1)$ at least $k^{\star}+1$ vacant sites. The remaining case in (2) is analogous, providing $x_0$ such that the box centred at $x_0$ contains at least $k^{\star}+1$ vacant sites, while the one at $x_0+1$ at least $k^{\star}+1$ particles.

From the previous observations, provided $\eta\in\Omega_{p,w,y}$, let $\ndef{x^\star\equiv x^\star(\eta)}\neq r+p+w,r+y$ be a fixed site in a box of length $2L$ contained in the segment connecting $p+w$ and $y+L$, and such that $B_{2L}(x^\star)$ contains, at least, $k^{\star}+1$ particles and vacant sites.%, or one of the boxes $B_L(x^\star)$ and $B_L(x^\star+1)$ contains the required number of particles, while the other of vacant sites.

We will estimate 
\begin{align}\label{2block:cutoff}
\frac{|\varphi_x(s,\cdot)|_{\infty}}{N\ell}
\sum_{x\in\T_N}
        \sum_{\substack{p\in L B_{\ell/L}\\r\in B_{L}}}
    \int_{\Omega_{p,w,y}}
    \mathbf{e}_{r+p+w,r+y}
    \big|
    \nabla_{r+p+w,r+y}[f_x]
    \big|
    \rmd\nu_{\alpha}^N
    .
\end{align}
The term associated with $\mathbf{e}_{r+y,r+p+w}$ is completely analogous. Consider the auxiliary set $\ndef{\Omega_\star}\subset\Omega_{p,w,y}$ composed by the configurations such that there exist a box of length $L$ in the segment connecting $p+w$ and $y$ containing a \textit{mobile cluster}.

For each $\eta\in\Omega_{p,w,y}$ let us split
\begin{align}\label{2block:cons-mc}
    \big|
    \nabla_{r+p+w,r+y}[f_x](\eta)
    \big|
    &\leq 
    \big|
    f_x(\eta^\star)
    -f_x(\eta)
    \big|
    +
    \big|
    \theta_{r+p+w,r+y}f_x(\eta)-\theta_{r+p+w,r+y}f_x(\eta^\star)
    \big|
    \\&+
    \big|
    \nabla_{r+p+w,r+y}[f_x](\eta^\star)
    \big|
    ,
\end{align}
where $\eta^\star\in\Omega_\star$ is the result of the following procedure. Let us focus on the first term in the right-hand side above. Fix some $\ndef{y^\star}\in B_{2L}(x^\star)$ such that $r+p+w,r+y\notin B_{}(y^\star)$, and shorten $\ndef{B^\star}\equiv B_{k^{\star}}(y^\star)$. Either $B^\star$ is a mobile cluster, or have too many particles, or to few.
 \begin{enumerate}
    \item If $B^\star$ has too many particles, each of the excess of particles are moved, through nearest-neighbour jumps, to their closest vacant site in $B_{2L}(x^\star)\backslash B^\star$;
    
    \item If $M$ has too many vacant sites, the required number of particles to form a mobile cluster, that are in $B_{2L}(x^\star)\backslash B^\star$ and the closest to $B^\star$, are moved to inside of $B^\star$.
\end{enumerate}
Note that because in \eqref{2block:right-dens} we require at least one more particle or vacant site than the length of the mobile cluster, then all mixing of the system previously described can be performed leaving the occupation-value at $r+p+w$ and $r+y$ invariant in the end, as they do not need to be used in the mobile clusters. 

This mass transportation can be encapsulated into an ordered sequence of nodes characterizing the nearest-neighbour exchanges, with a total of $S(\eta)\leq k^{\star}2L$ steps, that we write as $\{x_n(\eta),x_{n+1}(\eta)\}_{n=0,\dots, S(\eta)-1}$. Let us also associate to it the sequence of transformations defined through $P_{n+1}=\theta_{x_n,x_{n+1}}P_{n}$, for each $0\leq n\leq S-1$ and with $P_0=\mathbf{1}_{\Omega_N}$ the identity operator in $\Omega_N$.  From Young's inequality, for any $A_0>0$,
\begin{align}\label{2block:ref0}
    \big|
    f_x(\eta^\star)
    -
    f_x(\eta)
    \big|
    \leq 
    A_0
    \sum_{n=0}^{S-1}
    \big(
    P_{n+1}f_x+P_{n}f_x
    \big)
    +
    \frac{1}{2A_0}
    %\frac{1}{N^d}
    \sum_{n=0}^{S-1}
    \big|
    \nabla_{x_{n},x_{n+1}}[\sqrt{P_{n}f_x}]
    \big|^{2}.
\end{align}
Therefore,
\begin{multline}\label{2block:ref1}
    \frac{1}{N\ell}
    \sum_{x\in\T_N}
        \sum_{\substack{p\in L B_{\ell/L}\\r\in B_{L}}}
    \int_{\Omega_{p,w,y}}
    \mathbf{e}_{r+p+w,r+y}
    \big|
    f_x(\eta^\star)
    -
    f_x(\eta)
    \big|
    \rmd\nu_{\alpha}^N
    \\
    \leq 
    4A_0 k^{\star}L
    +
    \frac{1}{2A_0N\ell}
    \sum_{x\in\T_N}
        \sum_{\substack{p\in L B_{\ell/L}\\r\in B_{L}}}
    \int_{\Omega_{p,w,y}}
    \sum_{n=0}^{S-1}
    \big|
    \nabla_{x_{n},x_{n+1}}[\sqrt{P_{n}f_x]}
    \big|^{2}
    \rmd\nu_{\alpha}^N
\end{multline}
where we bounded from above $S(\eta)<2L k^{\star}$, and used the invariance of $\nu_\alpha^N$ for the exchange of occupation values.

The treatment of the term resulting from the first quantity in the right-hand side of \eqref{2block:cons-mc} is concluded by bounding from above 
\begin{align}\label{2block:ref2}
    \sum_{x\in\T_N}
    \int_{\Omega_{p,w,y}}
    \sum_{n=0}^{S-1}
    \big|
    \nabla_{x_{n},x_{n+1}}[\sqrt{P_{n}f_x]}
    \big|^{2}
    \rmd\nu_{\alpha}^N
    \leq 
    \frac{4k^{\star}L}{N^2\mathfrak{p}_N}
    \mathfrak{D}_N(\sqrt{f}\mid\nu_\alpha^N)  
    .
\end{align}
The second quantity in \eqref{2block:cons-mc} is estimated analogously from the invariance of $\nu_\alpha^N$ for $\theta_{r+p+w,r+y}$.

Now for the term arising from \eqref{2block:cutoff} and associated with the third term in the right-hand side of \eqref{2block:cons-mc}, we need to estimate
\begin{align}\label{2block:mc-to-flip0}
    \frac{1}{N\ell}
    % \sum_{p\in L B_{\ell/L}}
    % \sum_{r\in B_{L}}
    \sum_{\substack{p\in L B_{\ell/L}\\r\in B_{L}}}
    \sum_{x\in\T_N}
    \int_{\Omega_{p,w,y}}
    \mathbf{e}_{r+p+w,r+y}(\eta^\star)
    \big|
    \nabla_{r+p+w,r+y}[f_x](\eta^\star)
    \big|
    \rmd\nu_{\alpha}^N(\eta^\star),
\end{align}
where $\eta^\star=P_{S}\eta$. In order to exchange the occupation-values in $\{r+p+w,r+y\}$, fixed $\eta^\star\in \Omega_{\star}$, we consider the following path:
\begin{enumerate}
    \item the mobile cluster in $B^\star$ is moved to the vicinity of the site $r+p+w$ through a sequence of no more than $k^{\star}\rmd (y^\star,r+p+w)<k^{\star}\rmd(w+\ell,y)$  nearest-neighbour jumps; 
    
    \item the particle located at $r+p+w$ is moved, with the assistance of the mobile cluster, to a vicinity of the site $r+y$ with no more than $k^{\star}\rmd (p+w,y)<k^{\star}\rmd(w+\ell,y)$ steps; 
    
    \item the vacant site at $r+y$ is exchanged with the particle that was originally at $r+p+w$; 
    
    \item the mobile cluster transports to the site $r+p+w$ the vacant site at the vicinity of $r+y$, that was originally at $r+y$, with no more than $k^{\star}\rmd(p+w,y)<k^{\star}\rmd(w+\ell,y)$ steps; 
    \item the mobile cluster is transported back to its original position with no more than $k^{\star}\rmd(w+\ell,y)$ steps. 
\end{enumerate}

Let $\{y_n,y_{n+1}\}_{n=0,\dots,S^\star-1}$ be the sequence of nodes where the occupation variables are exchanged according to the procedure just described, and $Q_{n+1}=\theta_{y_n,y_{n+1}}Q_{n}$ with $Q_0=\mathbf{1}_{\Omega_N}$. It is crucial to note that for $\eta^\star\in\Omega_\star$ it holds $\mathbf{r}^{\star}(Q_{n}\eta^\star)\geq r_0$. One may now proceed analogously to \eqref{2block:ref0},\eqref{2block:ref1} and \eqref{2block:ref2} to see that \eqref{2block:mc-to-flip0} is no larger than 
\begin{align}
    4k^{\star}\rmd(w+\ell,y)A_{k^{\star}} 
    +
    \frac{1}{2r_0A_{k^{\star}}N\ell}
    \sum_{x\in\T_N}
    \sum_{\substack{p\in L B_{\ell/L}\\r\in B_{L}}}
    \int_{\Omega_{\star}}
    \sum_{n=0}^{S^\star-1}
    Q_{n}\mathbf{r}^{\star}
    \big|
    \nabla_{y_{n},y_{n+1}}[\sqrt{Q_{n}f_x}]
    \big|^{2}
    \rmd\nu_{\alpha}^N
    \\
    \leq 
    4k^{\star}\rmd(w+\ell,y) 
    +
    \frac{4k^{\star}\rmd(w+\ell,y)}{r_0A_{k^{\star}} NN^2}
    \mathfrak{D}_N(\sqrt{f}\mid\nu_\alpha^N)
    ,
\end{align}
for any $A_{k^{\star}}>0$.

Collecting the inequality in the previous display, \eqref{2block:ref1} and \eqref{2block:ref2}, then applying them to \eqref{2block:cutoff} with the triangle inequality and \eqref{2block:cons-mc}, we conclude the treatment of \eqref{2block:cutoff}. This implies that \eqref{2block:var} is bounded from above by
\begin{multline}
    \frac{c_{\alpha}}{A}
    +\text{c}_{t,\varphi}\frac{k^{\star}}{L}
    +4k^{\star}LA_0
    +4k^{\star}A_{k^{\star}}\rmd(w+\ell,y) 
    \\+
    \frac1N\bigg(
    \frac{4\text{c}_{t,\varphi}}{r_0}\frac{k^{\star}\rmd(w+\ell,y)}{A_{k^{\star}} N^2}
    +2\text{c}_{t,\varphi}\frac{k^{\star}L}{A_0N^2\mathfrak{p}_N}
    -\frac{t}{2A}
    \bigg)
    \mathfrak{D}_N(\sqrt{f}\mid\nu_\alpha^N)
    .
\end{multline}
Solving for $A_0,A_{k^{\star}}$ and $A$ the system
\begin{align}
    \begin{cases}
    \frac{4\text{c}_{t,\varphi}}{r_0}\frac{k^{\star}\rmd(w+\ell,y)}{A_{k^{\star}} N^2}
    =\frac{t}{4A}
    \\
    2\text{c}_{t,\varphi}\frac{k^{\star}L}{A_0N^2\mathfrak{p}_N}
    =\frac{t}{4A}
    \\
    \frac{c_{\alpha}}{A}
    =4k^{\star}LA_0
	+4k^{\star}A_{k^{\star}}\rmd(w+\ell,y) 
    \end{cases}
\end{align}
gives the upper-bound in the statement.
	
\end{proof}
%%%%%%%%%%%%%%%%%%%%%%%%%%%%%%%%%%%%%%%%%%%%%%%%%%%%%%%%

\section{Energy Estimate}\label{sec:energy}
The goal of this section is to prove that $ \Phi(\rho)\in L^2([0,T];\mathcal{H}^1(\mathbb{T})) $. From \cite[Lemma A.1.9]{phd:adriana}, it is enough to show that there exists a function $ \partial\Phi\in L^2([0,T]\times \mathbb{T}) $ satisfying for all $ G\in C^{0,1}([0,T]\times\mathbb{T}) $ the identity
\begin{align*}
	\int_0^T\inner{\partial_uG,\Phi}
	\rmd s
	=-
	\int_0^T\inner{G,\partial\Phi}
	\rmd s
	.
\end{align*}
As explained in \cite[Section 5]{s2f}, to see this it is enough to show that an energy functional on $\mathcal{D}([0,T],\mathcal{M}_+)\to\mathbb{R}\cup\{\infty\} $ associated with $\Phi$ is bounded, then argue via Riez's representation theorem. Regarding the latter, that is what we show here, there should exist finite constants $ \kappa,K>0 $ such that
\begin{align}\label{energy:bound}
	\text{E}_{\mathbb{Q}}
	\left[
	\sup_{G\in C^{0,1}([0,T]\times\mathbb{T})}
	%\mathcal{E}_{G,\kappa}^{\Phi}(\pi)
	\int_0^T
	\bigg\{
	\inner{\partial_u G,\Phi(\rho)}
	-
	\kappa\norm{G}_{L^2(\T)}^2
	\bigg\}
	\rmd s
	\right]
	\leq K
	.
\end{align}
We are going to show that the above holds with $\kappa=|\mathbf{c}_{\ell_N}|_{\infty}$ and $K=\text{c}_\alpha$, where we recall $\text{c}_\alpha$ from the proof of Proposition \ref{prop:base} to be such that $\text{H}(\mu_N|\nu_\alpha^N)\leq Nc_{\alpha}$.

From the uniform convergence of $(\Phi_L)_L$ towards $\Phi$, it is enough to study the functional given by replacing in the previous display $\Phi$ by $\Phi_L$, for a fixed $L$ that shall be taken to $+\infty$ in the end.

It is a standard procedure to consider a countable dense subset $ \{G^{[p]}\}_{p\in\mathbb{N}} $ of $ C^{0,1}\left([0,T]\times \mathbb{T}\right) $, with respect to the norm $\int_0^T\norm{\cdot}_{L^2(\T)}\rmd s$, reducing the $\sup$ to a limit and a $\max$ of a finite collection (see \cite[Proof of Proposition 5.5 up to equation (72)]{s2f}); then argue via the already proved one and two-blocks estimates (Lemmas \ref{lem:1block} and \ref{lem:2block}). This implies that it is enough to prove that

\begin{align}\label{fde:eq:energy-h}
	\limsup_{\eps\to 0}\lim_{\ell\to+\infty}
	\liminf_{N\to+\infty}
	\mathbb{E}_{\mu_N}
	\Bigg[
	\max_{{p\leq \ell}}
	\int_0^T
	\Bigg\{
	\frac{1}{N}
	\sum_{x\in\mathbb{T}_N}
	\mathbf{h}_L(\tau_x\eta_{N^2s})
	\partial_u G^{[p]}(s,\tfrac{x}{N})
	-\norm{G^{[p]}}_{L^2}^2
	\Bigg\}
	\rmd s
	\Bigg]
	\leq K.
\end{align} 
The $\max$ above can be replaced by a summation over $p\leq \ell$, and summing and subtracting the appropriate terms inside the summation over $x\in\T_N$ above, $\mathbf{h}_L$ is replaced by $\mathbf{h}_L+\mathbf{g}_{\ell_N}$ by invoking Lemma \ref{lem:rest}. From Assumption \ref{ass}, $\mathbf{h}_L$ can be further replaced by $\mathbf{h}_{\ell_N}$. We recall that $\mathbf{H}_{\ell_N}=\mathbf{h}_{\ell_N}+\mathbf{g}_{\ell_N}$. Proceeding then with similar arguments as in Proposition \ref{prop:base}, one can reduce all of this to the need of showing that 
\begin{align}\label{energy:var}
\text{c}_{\alpha}
+
\limsup_{\eps\to 0}
\liminf_{N\to+\infty}
\int_0^T\sup_{f}
	\bigg\{
\int_{\Omega_N}	
f\frac{1}{N}
	\sum_{x\in\mathbb{T}_N}
	\tau_x\mathbf{H}_L
	\partial_u G_s(\tfrac{x}{N})
	\rmd\nu_\alpha^N
	-N\frac12\mathfrak{D}_N(\sqrt{f}|\nu_\alpha^N)
\\	-\kappa\norm{G_s}_{L^2}^2
	\bigg\}
	\rmd s
	\leq K,
\end{align}
where the supremum is over densities with respect to $ \nu_\alpha^N $, and $G\in C^{0,1}([0,T]\times\mathbb{T})$ is arbitrary. In order to prove the previous affirmations, the reader can follow the exposition in \cite[From equation (5.3) to (5.4)]{s2f}.

Performing a Taylor expansion, the space derivative can be replaced by its discrete version $N\nabla^+$, with a cost which vanishes as $N\to+\infty$ since $|\mathbf{H}_{\ell_N}|_{\infty}<\infty$ by Assumption \ref{ass}. This discrete derivative is then passed to $\mathbf{H}_{\ell_N}$ performing a summation by parts, allowing us to invoke the gradient property and then bounding from above the integral over $\Omega_N$ in the previous display by
\begin{align}
	\frac{1}{N}
	\sum_{x\in\mathbb{T}_N}
	G_s(\tfrac{x}{N})
	\int_{\Omega_N}	
%\nabla[\tau_x\mathbf{H}_L]
	\mathbf{c}_{\ell_N}(\tau_x\eta)
	(\eta(x+1)-\eta(x))
	f(\eta)
	\rmd\nu_\alpha^N(\eta)
	+|\mathbf{H}_{\ell_N}|_{\infty}\text{e}(N)
	,
	&&\lim_{N\rightarrow+\infty}|\text{e}(N)|=0.
\end{align}
From observation \eqref{by-parts} the absolute value of the first quantity in the previous display equals 
\begin{align}
	\bigg|
	\frac{1}{N}
	\sum_{x\in\mathbb{T}_N}
	\int_{\Omega_N}	
	%\nabla[\tau_x\mathbf{H}_L]
	\mathbf{r}_{\ell_N}(\tau_x\eta)
	(\eta(x+1)-\eta(x))
	G_s(\tfrac{x}{N})
	\nabla_{x,x+1}[f](\eta)
	\rmd\nu_\alpha^N(\eta)
	\bigg|
	\\\label{fde:eq:energy}
	\leq 
	\frac{1}{2A}\sum_{x\in\mathbb{T}_N}\sum_{\eta\in \Omega_N}
	\mathbf{r}_{\ell_N}(\tau_x\eta)
	\left(G_s(\tfrac{x+1}{N})\right)^2
	\left(\sqrt{f}(\eta)+\sqrt{f}(\eta^{x,x+1})\right)^2
	\nu_{\gamma}^N(\eta)
	+\frac{A}{2N^2}\mathfrak{D}_N(\sqrt{f}|\nu_\alpha^N)
		\\
	\leq
	\frac{|\mathbf{c}_{\ell_N}|_{\infty}}{A}
	\sum_{x\in\mathbb{T}_N}\left(G_s(\tfrac{x+1}{N})\right)^2
	+\frac{A}{2N^2}\mathfrak{D}_N(\sqrt{f}|\nu_\alpha^N)
	.
\end{align}

Recalling \eqref{energy:var}, in order to conclude the proof it is enough to fix $A=N$, $\kappa=|\mathbf{c}_{\ell_N}|_{\infty}$ and $K=\text{c}_\alpha$.

\begin{Rem}[Regime II]\label{rem:fde-energy}
	In the Regime \ref{II} one can show instead that $ \rho\in L^2([0,T];\mathcal{H}^1(\mathbb{T})) $. This corresponds to showing \eqref{energy:bound} with $\Phi(\rho)$ replaced by $\rho$, and the proof is much simpler since there is no truncation step needed and the function $\Phi(\rho)\equiv\rho$ is linear. As such, there is no need to introduce $\mathbf{g}_{\ell_N}$ and we have simply \eqref{fde:eq:energy-h} with $\tau_x\mathbf{h}_{L}$ there replaced by $\pi_x$. This leads instead to \eqref{fde:eq:energy} with $\mathbf{r}_{\ell_N}$ replaced by $\mathbf{e}_{0,1}+\mathbf{e}_{1,0}$, and $\mathfrak{D}_N$ replaced by $\tfrac{1}{\mathfrak{m}}\mathfrak{D}_N$. Consequently, $A=N/\mathfrak{m},\kappa=\mathfrak{m}$ and $K=\text{c}_\alpha$. 
\end{Rem}

\addtocontents{toc}{\protect\setcounter{tocdepth}{0}}

\subsection*{Funding:}
This project was partially supported by the ANR grant MICMOV (ANR-19-CE40-0012) of the French National Research Agency (ANR).

%At the end of the appendix it has to be set to the previous value
\addtocontents{toc}{\protect\setcounter{tocdepth}{2}}

\bibliographystyle{plain}
\bibliography{02.biblio}

\end{document}